\documentclass[12pt,letter]{article}
\usepackage{amssymb, amsthm}
\usepackage{amsfonts}
\usepackage{amsmath}
\usepackage{graphicx}
\usepackage[onehalfspacing]{setspace}
\usepackage{indentfirst}
\usepackage{longtable}
\usepackage{fancyhdr}
\usepackage{setspace}
\usepackage{multirow}

\usepackage{hyperref}

\newtheorem{theorem}{Theorem}
\newtheorem{lemma}{Lemma}
\newtheorem{corollary}{Corollary}


\begin{document}

\title{Nonparametric estimation of locally stationary Hawkes processes}

\author{E. Mammen\\ Institute of Applied Mathematics\\ Heidelberg University\\ Im Neuenheimer Feld 205\\
69120 Heidelberg, Germany\\ {mammen@math.uni-heidelberg.de}}

\maketitle

\begin{abstract} In this paper we consider multivariate Hawkes processes with baseline hazard and kernel functions that depend on time. This defines a class of locally stationary processes. We discuss estimation of the time-dependent baseline hazard and kernel functions based on a localized criterion. Theory on stationary Hawkes processes is extended to develop asymptotic theory for the estimator in the locally stationary model.

\end{abstract}
\newpage
\section{Introduction} A stationary mulitvariate Hawkes process is defined by the specification of a vector of baseline hazards and a matrix valued kernel function that model the input of recent jumps on the hazard. In this paper we generalize this to the case that baseline hazard and kernel function depend on time. We will use a locally stationary specification where the process can be applied by stationary Hawkes process, see \cite{D1996} for a general discussion of locally stationary processes. 
We will propose nonparametric  estimators  for baseline hazard and kernel function and we will develop asymptotic theory for these estimators. 

There is by now a rich literature on statistical inference for Hawkes processes. Hawkes processes have been applied in a grewing number of fields, including  crime analysis, see \cite{MSBST2011}, in statistical modeling of e-mail networks, see \cite {FSSCD2016}, and in genome analysis, see \cite{R-BS2010}.  A major number  of applications is coming from finance, for an overview see \cite{BMM2015}. A growing part of literature is concerned with nonparametric inference for Hawkes processes. 
For stationary nonparametric multivariate Hawkes processes, \cite{BDM2012} estimates the kernel of a multivariate nonparametric Hawkes process by estimating the jumps correlation matrix and using spectral methods. In \cite {BM2016} estimates of kernels of multivariate Hawkes processes are discussed that are based on solving empirical  integral equations with estimated average intensity vectors and estimated conditional laws. This approach is used in \cite{RBL2016} for the study of order book dynamics.
The papers \cite{Ki2016a} and \cite{Ki2016b} relate Hawkes processes to integer valued autoregressive processes of infinite order  INAR($\infty$). They approximate these processes by integer valued autoregressive processes of finite order $p< \infty$, INAR($p$), and use  methods  from statistical inference for INAR($p$) processes.  In \cite {EDD2017}
 a nonparametric estimator of kernels is proposed for multivariate Hawkes processes based on discretisations of the process. Its consistency is shown and the estimator is used for causal inference. The paper  \cite{BCC2015} discusses observations of $n$ independent Hawkes processes with constant baseline hazard and nonparametric kernel. The paper shows rates of convergence for a nonparametric maximum likelihood sieve estimator and it proves asymptotic normality for the parametric estimator of the baseline hazard. The papers \cite{JR-BR2015} and \cite{R-BS2010} develop deep theory for Hawkes processes and apply it for an asymptotic analysis of adaptive and LASSO-estimation of Hawkes processes.

There are some attempts to allow for nonstationary models. 
The papers 
\cite{ChHa2013} and \cite {ChHa2015} allow for a varying baseline hazard. They study estimates in an asymptotic framework where  the baseline hazard is multiplied by a factor that converges to infinity.  In the asymptotics the time horizont is kept fixed. The papers discuss models with parametric kernels and they allow for parametric and nonparametric specifications of the baseline hazard. In \cite {OHA2017} a Bayesian approach for models with time varying background rate is developed. The paper \cite{CP2017} considers parametric Hawkes processes with parameters depending on time.
 In \cite{RvSS2016}, \cite{RvSS2017} a new class of locally stationary multi-dimensional  Hawkes processes is proposed. Nonparametric estimation is discussed that is based on local Bartlett spectrums. The approach allows to compute approximations of first and second order moments.

In this paper we will  consider a model for multivariate Hawkes processes $N= (N^{(1)},...,N^{(d)})^\intercal$ where the  intensity function is defined by
\begin{equation} \label{definthawkes} \lambda^{(l)}(t) = \nu^{(l)}\left (\frac t T\right) + \sum_{m=1}^d\int_{t-A}^{t-} \mu^{(l,m)}\left (t-s;\frac t T\right) \ \mbox{d}N^{(m)}_s\end{equation}
for $l=1,...,d$, or in vector notation:
$$\lambda(t) = \nu\left (\frac t T\right) +\int_{t-A}^{t-} \mu\left (t-s;\frac t T\right)\ \mbox{d}N_s.$$
 We denote the true parameter functions  by $\nu_0= (\nu^{(m)}_0)_{m=1,...,d}$ and $\mu_0=(\mu^{(l,m)}_{0})_{l,m=1,...,d}$ with resulting intensity function $\lambda_0 =(\lambda_0^{(m)})_{m=1,...,d}$. We will use an asymptotic framework where $T \to \infty$.
We do not indicate in our notation that the process $N$ depends on $T$.  

In an alternative model one could consider that in \eqref{definthawkes} the term $\mu^{(l,m)} (t-s;$ $\frac t T)$ is replaced by $\mu^{(l,m)}\left (t-s;\frac sT\right)$. Estimators and an asymptotic theory for this model could be developed by the same approch as used in this paper for the model \eqref{definthawkes}. In this paper we only will discuss the specification \eqref{definthawkes} that as we expect is better motivated in more applications.

In this paper we will develop theory for estimation of the parameter $\nu^{(l)}_0\left (x_0\right)$ and the function $\mu^{(l,m)}_{0}\left (u;x_0\right)$ for $1 \leq m \leq d$ for a fixed value $x_0 \in (0,1)$ and for a fixed value of $l$. Without loss of generality we choose $l=1$ and we write 
$\nu^{*}_0 =\nu^{(1)}_0\left (x_0\right)$,  $\mu^{*,(m)}_{0}\left (u\right)=\mu^{(1,m)}_{0}\left (u;x_0\right)$, and $\mu^{*}_{0}\left (u\right)= (\mu^{*,(m)}_{0}\left (u\right))_{1\leq m \leq d}$.
We also write $t_0= x_0 T$. Note that this value depends on $T$. We assume that the Hawkes process is observed on an interval $[0,T]$. To simplify discussions we assume that the observed process has the intensity function \eqref{definthawkes} for $(- \infty, T]$. All counting processes considered in this paper are normed to be equal to 0 for $t=0$.

Our estimation strategy will be introduced in the next section. Asymptotic theory will be developed in Section \ref{asympsec}. Proofs are deferred to Section \ref{secproofs}.

\section {Estimation strategy}

We now come to the definition of our estimator. It is based on B-spline fits with accuracy measured by a local criterion function that is localized around $x_0$.
The estimator $\left (\hat \nu^* , \hat \mu^*\left (\cdot \right)\right )$ of $\left (\nu_0^* , \mu^*_0\left (\cdot \right)\right )$ is equal to
\begin{eqnarray*}
\hat \nu^* &=& \hat \theta_{0,1},\\
\hat \mu^*(u) &=& \sum_{j=1} ^J \hat \theta_{j,1} \psi_j(u),
\end{eqnarray*}
 where  $\psi_1, \psi_2, ..., \psi_J$ is a B-spline basis for $d$-dimensional functions on $[0,A]$ with equi-distant knot-points and with norm $\| \psi_l\|^2 = \int \psi^\intercal_l(x) \psi_l(x) \mathrm d x =1$ for $1 \leq l \leq J$ and where $\hat \theta_{0,1}, \hat \theta_{1,1}, ..., \hat \theta_{J,1} $ are defined as follows.

We choose $\hat \theta =(\hat \theta_0^\intercal,..., \hat \theta_J^\intercal) ^\intercal$ with $\hat \theta_j  = (\hat \theta_{j1},...,\hat \theta_{jK})^\intercal$  for $0 \leq j \leq J$ such that for some bandwidth $h \to 0$ and basis dimension $J \to \infty$
\begin{eqnarray*}
\rho(\theta) &=&- \frac 2 T \int \lambda^\#(t;  \theta)  h^{-1} K\left ( \frac {t-t_0} {Th}\right ) \mbox{d}N^{(1)}_t \\ && \qquad+  \frac 1 T \int \lambda^\# (t; \theta)^2 h^{-1} K\left ( \frac {t-t_0} {Th}\right ) \mbox{d}t 
\end{eqnarray*}
is minimized for $ \theta= \hat \theta$. The function $K$ is a kernel function, i.e. a probability density function. Furthermore, 
\begin{eqnarray*}\lambda^\# (t;  \theta) &=&  \sum_{k=1}^K \theta_{0,k} \left ( \frac {t-t_0}{Th}\right ) ^{k-1}\\ &&\qquad  + \sum_{j=1}^J  \sum_{k=1}^K \theta_{jk}  \int_{t-A}^{t-} \left ( \frac {t-t_0}{Th}\right ) ^{k-1} \psi_j(t-u)^\intercal \mbox{d}N_u.\end{eqnarray*}
Note that 
\begin{eqnarray*}
\rho(\theta) &=&- 2 \hat \tau^\intercal \theta + \theta^\intercal \Delta \theta, 
\end{eqnarray*}
where $\hat \tau = (\hat \tau_{01},..., \hat \tau_{0K},\hat \tau_{11},..., \hat \tau_{JK})^\intercal$ and where $$\Delta = \left(\begin{array}{cccc} \Delta_{00} & \Delta_{01} & ... &  \Delta_{0J}  \\ \Delta_{01}^\intercal   & \Delta_{11} & ... & \Delta_{1J} \\ \vdots & \vdots & \ddots & \vdots\\\ \Delta_{0J}^\intercal  & \Delta_{J1}  & ... & \Delta_{JJ}\end{array}\right)$$ with
 \begin{eqnarray*}
 \Delta_{0,0} &=&\left ( \frac 1 {Th} \int_{t_0 - Th}^{t_0 + Th} \left ( \frac {t-t_0}{Th}\right ) ^{k+k^\prime -2}    K\left ( \frac {t-t_0} {Th}\right )  \mbox{d} t\right )_{1\leq k, k\prime \leq K},\end{eqnarray*}
   \begin{eqnarray*}
  \Delta_{0,j} &=&\left ( \frac 1 {Th} \int_{t_0 - Th}^{t_0 + Th} \int_{t-A}^{t-} \psi^\intercal_j(t-u)   \left ( \frac {t-t_0}{Th}\right ) ^{k+k^\prime -2}    K\left ( \frac {t-t_0} {Th}\right )  \mbox{d} N_u \mbox{d} t\right )_{1\leq k, k\prime \leq K},
  \\
  \Delta_{j,j^\prime} &=&\left ( \frac 1 {Th} \int_{t_0 - Th}^{t_0 + Th} \int_{t-A}^{t-}\int_{t-A}^{t-} \psi^\intercal_j(t-u)   \mbox{d} N_u\  \psi^\intercal_{j^\prime}(t-v) \mbox{d} N_v\right . \\ && \qquad \times \left .   \left ( \frac {t-t_0}{Th}\right ) ^{k+k^\prime -2} K\left ( \frac {t-t_0} {Th}\right ) \mbox{d} t\right )_{1\leq k, k\prime \leq K}.
\end{eqnarray*}
Furthermore, we define $\hat \tau = \left(\hat \tau_0 ^\intercal , ...,\hat \tau_J ^\intercal \right)^\intercal $ with
 \begin{eqnarray*}
\hat \tau_0 &=& \left (\frac 1 {Th} \int_{t_0 - Th}^{t_0 + Th}  \left ( \frac {t-t_0}{Th}\right ) ^{k-1} K\left ( \frac {t-t_0} {Th}\right ) \mbox{d} N^{(1)}_t \right )_{1 \leq k \leq K}, \\
\hat \tau_j &=&\left ( \frac 1 {Th} \int_{t_0 - Th}^{t_0 + Th}\int_{t-A}^{t-} \psi^\intercal_j(t-u)   \left ( \frac {t-t_0}{Th}\right ) ^{k-1}K\left ( \frac {t-t_0} {Th}\right ) \mbox{d} N_u  \mbox{d} N_t^{(1)}\right )_{1 \leq k \leq K}.
\end{eqnarray*}
Note that 
$$\hat \theta = \Delta^{-1} \hat \tau$$
as long as $\Delta$ is invertible.

\section{Asymptotic analysis} \label{asympsec}

We make the following assumptions:

\begin{itemize}
\item[(A1)] The support of the functions $\mu_{0}^{(l,m)}(s,x)$ is contained in $[0,A]\times [0,1]$ for some $A>0$ and the functions are bounded and  positive for $l,m = 1,...,d$, $s\in [0,A], x \leq 1$. The matrix $\Gamma^+$ has spectral radius strictly smaller than 1. Here we define $\Gamma^+$ as the matrix with elements $$\int _0 ^A \sup_{x \leq 1}\mu_{0}^{(l,m)} (s, x) ds .$$ The function $\nu_0(x)$ is bounded and bounded away from 0 for $x \leq 1$. 
\item[(A2)] The partial derivatives of  $\mu_{0}^{(l,m)}(s,\cdot)$ with respect to the second argument exist  and they are uniformly absolutely bounded  for $l,m = 1,...,d$ and $0 \leq s \leq A$.

\item[(A4)] It holds that $J(\log T)^5  \frac 1 {\sqrt{hT}}\to 0$, $\log T /J \to 0$ and $h T^{1-\delta} \to \infty$ for $\delta > 0$ small enough. 

\item[(A5)] There exist $\theta^*_{j,k}$ for $k=1,..,K$ and $j=0,...,J$, depending on $J$ such that \begin{eqnarray*} &&\left | \nu^{(1)}(x)- \sum_{k=1}^K \theta^*_{0,k}  (x-x_0)^{k-1} \right | \leq \varepsilon_T,\\
&& \left | \mu_0^{(1,m)}(u,x)- \sum_{k=1}^K \sum_{j=1}^J \theta^*_{j,k} \psi_j(u) (x-x_0)^{k-1} \right | \leq \varepsilon_T
\end{eqnarray*}
for some sequence $\varepsilon_T \to 0$ and for $u \in[ 0,A]$ and $|x-x_0| \leq h$.

\end{itemize}

\begin{theorem}\label{theo1} Make Assumptions (A1) -- (A5). It holds that
\begin{eqnarray*}
\hat \nu^* - \nu_0^* &=& O_P\left (\varepsilon_T + \sqrt{\frac J {hT}}\right ),\\
\left ( \int {[ \hat \mu^{*,(l)}(u) - \mu_0^{*,(l)}(u)]^2 \mathrm{d} u} \right)^{1/2}&=& O_P\left (\varepsilon_T + \sqrt{\frac J {hT}}\right )
\end{eqnarray*}
for $l=1,...,d$.
\end{theorem}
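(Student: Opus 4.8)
The plan is to exploit that the criterion $\rho(\theta)=-2\hat\tau^\intercal\theta+\theta^\intercal\Delta\theta$ is an exact quadratic, so that $\hat\theta=\Delta^{-1}\hat\tau$ whenever $\Delta$ is invertible, and then to run a bias/variance analysis of this linear estimator. Write $\lambda^\#(t;\theta)=\theta^\intercal Z(t)$, where $Z(t)$ is the predictable vector with components $\big(\tfrac{t-t_0}{Th}\big)^{k-1}$ and $\int_{t-A}^{t-}\big(\tfrac{t-t_0}{Th}\big)^{k-1}\psi_j(t-u)^\intercal\,\mathrm dN_u$; then $\Delta=\tfrac1{Th}\int Z(t)Z(t)^\intercal K\big(\tfrac{t-t_0}{Th}\big)\,\mathrm dt$ and $\hat\tau=\tfrac1{Th}\int Z(t)K\big(\tfrac{t-t_0}{Th}\big)\,\mathrm dN^{(1)}_t$. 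Rescale the coefficients from (A5) to $\theta^{**}_{j,k}:=h^{k-1}\theta^*_{j,k}$, so that $Z(t)^\intercal\theta^{**}$ is precisely the (A5)-approximant evaluated at $x=t/T$ and, crucially, $\theta^{**}_{j,1}=\theta^*_{j,1}$. Decomposing $\mathrm dN^{(1)}_t=\lambda_0^{(1)}(t)\,\mathrm dt+\mathrm dM^{(1)}_t$ with $M^{(1)}$ the compensated martingale, and writing $\lambda_0^{(1)}(t)=Z(t)^\intercal\theta^{**}+r(t)$, Assumption (A5) gives $|r(t)|\le\varepsilon_T\big(1+\sum_m(N^{(m)}_t-N^{(m)}_{t-A})\big)$ on the kernel support $\{|t-t_0|\le Th\}$, where $|t/T-x_0|\le h$. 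Hence, on the event that $\Delta$ is invertible,
\[
  \hat\theta-\theta^{**}=\Delta^{-1}(B+V),
\]
with $B:=\tfrac1{Th}\int Z(t)K\big(\tfrac{t-t_0}{Th}\big)r(t)\,\mathrm dt$ (bias) and $V:=\tfrac1{Th}\int Z(t)K\big(\tfrac{t-t_0}{Th}\big)\,\mathrm dM^{(1)}_t$ (stochastic part).

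The central step is a lemma: with probability tending to one, $\Delta$ is invertible, $\lambda_{\min}(\Delta)\ge c$ for a constant $c>0$, and $\mathrm{tr}(\Delta)=O_P(J)$. I would establish it by comparison with $\bar\Delta:=\mathbb E\,\Delta$. For $\lambda_{\min}(\bar\Delta)=\min_{\|\gamma\|=1}\tfrac1{Th}\int\mathbb E[(\gamma^\intercal Z(t))^2]K\big(\tfrac{t-t_0}{Th}\big)\,\mathrm dt$, a lower bound that does not degrade as $J\to\infty$ reduces, after replacing $\mu_0(\cdot,t/T)$ by $\mu_0(\cdot,x_0)$ over the window via (A2), to a non-degeneracy bound for the localized regressors of the stationary Hawkes process frozen at $x_0$: using its immigration--branching representation and the fact that $\nu_0$ is bounded away from $0$ (so there is a Poisson-like component), the second-moment matrix of $\big(\int_{t-A}^{t-}\psi_j(t-u)^\intercal\,\mathrm dN_u\big)_j$ is bounded below by a constant multiple of the $L^2$ Gram matrix $G=(\int\psi_j^\intercal\psi_{j'})_{j,j'}$, which is $\succeq c''I$ because equi-distant $L^2$-normalized B-splines form a stable (Riesz) basis; the polynomial block contributes a fixed positive definite $K\times K$ matrix and the cross terms are of lower order since a normalized B-spline has $L^1$-mass $O(J^{-1/2})$. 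The bound $\mathrm{tr}(\bar\Delta)=O(J)$ follows from $\mathbb E[(\int_{t-A}^{t-}\psi_j(t-u)^\intercal\,\mathrm dN_u)^2]=O(1)$ uniformly in $j$, using $\|\psi_j\|=1$ and the boundedness of the intensity and of the covariance density of $N$ --- the latter finite exactly because $\Gamma^+$ has spectral radius $<1$ in (A1). For the fluctuation I would use $\|\Delta-\bar\Delta\|_{\mathrm{op}}\le\|\Delta-\bar\Delta\|_F$: every entry of $\Delta-\bar\Delta$ is a centred single or double stochastic integral against $\mathrm dN$, and the concentration/moment inequalities for locally stationary Hawkes processes --- obtained by extending the stationary theory, and where the powers of $\log T$ arise --- bound each entry at rate $O_P((\log T)^{c_0}/\sqrt{hT})$, so that summing over the $\asymp(JK)^2$ entries gives $O_P(J(\log T)^{c_0}/\sqrt{hT})=o_P(1)$ by (A4). (The conditions $hT^{1-\delta}\to\infty$ and $\log T/J\to0$ make the localized sample polynomially large and let $J$ grow fast enough for these inequalities.)

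Granting the lemma, the bias and variance terms are handled in the usual way. For the bias, $B^\intercal\Delta^{-1}B=\|\Pi r\|_K^2\le\|r\|_K^2$, where $\Pi$ is the orthogonal projection onto $\mathrm{span}\{Z_i\}$ in the inner product $\langle f,g\rangle_K=\tfrac1{Th}\int fgK\big(\tfrac{t-t_0}{Th}\big)\,\mathrm dt$ and $\|f\|_K^2=\langle f,f\rangle_K$; and $\mathbb E\|r\|_K^2\le C\varepsilon_T^2$, since $\tfrac1{Th}\int K\big(\tfrac{t-t_0}{Th}\big)\,\mathrm dt=1$ and the window increments $N^{(m)}_t-N^{(m)}_{t-A}$ have bounded second moment under (A1). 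Hence, on the good event, $\|\Delta^{-1}B\|^2\le B^\intercal\Delta^{-1}B/\lambda_{\min}(\Delta)=O_P(\varepsilon_T^2)$. For the stochastic part, $Z$ is predictable, so the martingale isometry gives $\mathbb E\|V\|^2=\tfrac1{(Th)^2}\mathbb E\int\|Z(t)\|^2K\big(\tfrac{t-t_0}{Th}\big)^{2}\lambda_0^{(1)}(t)\,\mathrm dt=O(J/(hT))$, using $\|K\|_\infty<\infty$ and the Hawkes moment bound $\mathbb E[\|Z(t)\|^2\lambda_0^{(1)}(t)]=O(J)$ uniformly for $t$ near $t_0$; hence $\|\Delta^{-1}V\|\le\|V\|/\lambda_{\min}(\Delta)=O_P(\sqrt{J/(hT)})$. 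Combining, $\|\hat\theta-\theta^{**}\|=O_P(\varepsilon_T+\sqrt{J/(hT)})$. Finally, $|\hat\nu^*-\nu_0^*|=|\hat\theta_{0,1}-\nu_0^*|\le\|\hat\theta-\theta^{**}\|+|\theta^*_{0,1}-\nu^{(1)}(x_0)|\le\|\hat\theta-\theta^{**}\|+\varepsilon_T$ by (A5) at $x=x_0$; and since $\sum_l\int[\hat\mu^{*,(l)}(u)-\mu_0^{*,(l)}(u)]^2\,\mathrm du=\|\hat\mu^*-\mu_0^*\|^2$, writing $\delta_1:=(\hat\theta_{j,1}-\theta^*_{j,1})_j$ we get $\|\hat\mu^*-\mu_0^*\|\le(\delta_1^\intercal G\delta_1)^{1/2}+C\varepsilon_T\le\lambda_{\max}(G)^{1/2}\|\hat\theta-\theta^{**}\|+C\varepsilon_T$, with $\lambda_{\max}(G)=O(1)$ by Gershgorin and the band structure of $G$; both right-hand sides are $O_P(\varepsilon_T+\sqrt{J/(hT)})$, which gives the theorem.

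The main obstacle is the lemma, and within it the uniform-in-$J$ lower bound $\lambda_{\min}(\bar\Delta)\ge c$: turning the qualitative non-degeneracy of the localized regressors into a bound that does not deteriorate as the spline dimension grows genuinely requires the cluster/immigration structure of the Hawkes process together with the $L^2$-stability of the B-spline basis. The companion uniform concentration $\|\Delta-\bar\Delta\|_{\mathrm{op}}=o_P(1)$ --- controlling the double stochastic integrals $\int\int\psi_j\psi_{j'}\,\mathrm dN\,\mathrm dN$ simultaneously over $\asymp J^2$ index pairs --- is where the extension of the stationary Hawkes theory does the real work and where condition (A4) is calibrated.
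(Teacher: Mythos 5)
Your proposal follows the same architecture as the paper's proof: linearize via $\hat\theta=\Delta^{-1}\hat\tau$, prove a high-probability lower bound $\lambda_{\min}(\Delta)\ge c$, and split the score $\hat\tau$ into a compensated-martingale piece and an approximation-bias piece. Your $V$ and $B$ are the paper's $\hat\tau_A$ and $\hat\tau_B$ after noting $\beta^*=\Delta^{-1}\hat\tau_C$, and your isometry bound $\mathbb E\|V\|^2=O(J/(hT))$ matches the paper's per-component Lemma 9 summed over $j$. On the hard part — the uniform-in-$J$ eigenvalue bound and the concentration $\|\Delta-\mathbb E\Delta\|_{\mathrm{op}}=o_P(1)$ — you correctly identify both the obstacle and the tools (cluster/immigration representation, coupling of the process into independent blocks, moment/Bernstein inequalities), though you leave them at sketch level; the paper spends Lemmas 1--8 on precisely this, deriving the moment formula for $\mathbb E[\Delta]$, proving the eigenvalue bound via the Neumann series of the reproduction kernel, and obtaining entrywise $T^{\epsilon}/\sqrt{hT}$ concentration from an extinction-time coupling plus Rosenthal and Bernstein, so your sketch is consistent with the actual argument rather than an alternative to it. One place your write-up is genuinely sharper than the paper's: the paper's Lemma 9 gives only the componentwise bound $\hat\tau_{2,B,j}=O_P(\varepsilon_T)$, which by a naive $\ell^2$ sum would yield $\|\Delta^{-1}\hat\tau_B\|=O_P(\sqrt J\,\varepsilon_T)$, a spurious $\sqrt J$; your projection identity $B^\intercal\Delta^{-1}B=\|\Pi r\|_K^2\le\|r\|_K^2=O_P(\varepsilon_T^2)$ combined with $\|\Delta^{-1}B\|^2\le\lambda_{\min}(\Delta)^{-1}B^\intercal\Delta^{-1}B$ gives the dimension-free rate $O_P(\varepsilon_T)$ cleanly and is the right way to close this step.
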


By using some simplifications in our proofs one gets the following result for a stationary Hawkes process $N_t$ with intensity function
\begin{eqnarray} \label{stat}\lambda^{(l)}(t) = \nu^{(l)} + \sum_{m=1}^d\int_{t-A}^{t-} \mu^{(l,m)}\left (t-s\right) \mathrm{d} N_s^{(m)}\end{eqnarray}
 for some constants $\nu^{(l)}$  and functions $\mu^{(l,m)}\left (\cdot \right)$ $(1 \leq l,m \leq d)$.
Now, the estimator $\left (\hat \nu , \hat \mu\left (\cdot \right)\right )$ of $\left (\nu_0 , \mu_0\left (\cdot \right)\right )$ is defined by $
\hat \mu(u) = \sum_{j=1} ^J \hat \beta_{j} \psi_j(u),
$
 where  $(\hat \nu, \hat \beta_{1}, ..., \hat \beta_{J})$ minimizes
\begin{eqnarray*}
- \frac 2 T \int \lambda^\#(t; \nu, \beta)   \mbox{d}N^{(1)}_t +  \frac 1 T \int \lambda^\# (t;\nu, \beta)^2  \mbox{d}t 
\end{eqnarray*}
with $\lambda^\# (t; \nu, \beta) =   \nu + \sum_{j=1}^J   \beta_{j}  \int_{t-A}^{t-}  \psi_j(t-u)^\intercal \mbox{d}N_u$. In the stationary case we get the following result:
\begin{corollary} For a stationary process with \eqref{stat}
assume that for $1 \leq l,m \leq d$
the support of the functions $\mu_{0}^{(l,m)}(s)$ is contained in $[0,A]$ for some $A>0$, that the functions are positive for $l,m = 1,...,d$, $s\in [0,A]$, and that the matrix $\Gamma^+$ has spectral radius strictly smaller than 1. Now, we define $\Gamma^+$ as the matrix with elements $\int _0 ^A \mu_{0}^{(l,m)} (s) ds $. Furthermore, we assume that $\nu_0 >0$, that $J(\log T)^5  /\sqrt {T}\to 0$ and that
there exist $\beta^*_{j}$ for  $j=1,...,J$, depending on $J$, such that \begin{eqnarray*} 
&& \left | \mu_0^{(1,m)}(u)-  \sum_{j=1}^J \beta^*_{j} \psi_j(u)  \right | \leq \varepsilon_T
\end{eqnarray*}
for some sequence $\varepsilon_T \to 0$ and for $u \in[ 0,A]$. 
Then,  it holds that $\hat \nu - \nu_0 = O_P\left (\varepsilon_T + \sqrt{J/T}\right )$ and
\begin{eqnarray*}
\left ( \int {[ \hat \mu^{(1,m)}(u) - \mu_0^{(1,m)}(u)]^2 \mathrm{d} u} \right)^{1/2}&=& O_P\left (\varepsilon_T + \sqrt{J/T}\right )
\end{eqnarray*}
for $m=1,...,d$.
\end{corollary}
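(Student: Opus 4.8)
The corollary is the stationary specialisation of Theorem~\ref{theo1}, obtained from the proof of that theorem by deleting the localising kernel $K$ and bandwidth $h$ — so that the effective sample size becomes $T$ rather than $hT$, which produces the rate $\sqrt{J/T}$ — and by keeping only the $k=1$ term of the local polynomial expansion, the parameter functions no longer depending on $x$. The plan is therefore to rerun that argument in this simpler setting. Put $b_0(t)\equiv 1$ and $b_j(t)=\int_{t-A}^{t-}\psi_j(t-u)^\intercal\,\mathrm{d}N_u$ for $1\le j\le J$, so that $\lambda^\#(t;\nu,\beta)=\sum_i\theta_i b_i(t)$ with $\theta=(\nu,\beta_1,\dots,\beta_J)^\intercal$. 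The criterion is then $-2\hat\tau^\intercal\theta+\theta^\intercal\Delta\theta$ with $\Delta_{ii'}=\frac1T\int_0^T b_i(t)b_{i'}(t)\,\mathrm{d}t$ and $\hat\tau_i=\frac1T\int_0^T b_i(t)\,\mathrm{d}N^{(1)}_t$, and its minimiser is $\hat\theta=\Delta^{-1}\hat\tau$ (on the negligible event where $\Delta$ is singular, set $\hat\theta=0$). Decomposing $\mathrm{d}N^{(1)}_t=\lambda_0^{(1)}(t)\,\mathrm{d}t+\mathrm{d}M^{(1)}_t$ with $M^{(1)}$ the compensated martingale, and using that $\frac1T\int_0^T b_i(t)\lambda^\#(t;\theta^*)\,\mathrm{d}t=(\Delta\theta^*)_i$ holds identically by the definition of $\Delta$, one obtains the exact identity $\hat\theta-\theta^*=\Delta^{-1}(R_1+R_2)$ with bias $R_{1,i}=\frac1T\int_0^T b_i(t)[\lambda_0^{(1)}(t)-\lambda^\#(t;\theta^*)]\,\mathrm{d}t$ and noise $R_{2,i}=\frac1T\int_0^T b_i(t)\,\mathrm{d}M^{(1)}_t$.

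\textbf{Step 1: a uniform lower bound for $\Delta$.} Under (A1) the process is stationary, so $\Lambda:=\mathbb E[\Delta]$ is a constant $(J+1)\times(J+1)$ matrix. I would first show $\lambda_{\min}(\Lambda)\ge c>0$ uniformly in $J$. Writing $v=(v_0,w)$, a Schur-complement step reduces this to $\mathrm{Var}\!\big(\sum_{j\ge 1}w_j b_j(t)\big)\ge c\|w\|^2$; and for $\phi=\sum_j w_j\psi_j$ one has $\mathrm{Var}\!\big(\int_{t-A}^{t-}\phi(t-u)^\intercal\mathrm{d}N_u\big)\ge\sum_{m}\rho^{(m)}\int_0^A\phi^{(m)}(s)^2\,\mathrm{d}s$, because the diagonal contribution to the second moment equals this quantity ($\rho=(\rho^{(m)})$ the stationary mean-intensity vector) and the reduced covariance of a Hawkes process with nonnegative kernels is nonnegative by the cluster representation; since $\rho^{(m)}$ is bounded away from $0$ (as $\nu_0>0$) and equidistant unit-$L^2$-norm B-splines are $L^2$-stable, this is $\ge c\|w\|^2$. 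The remaining, and genuinely substantial, task is to upgrade this to $\lambda_{\min}(\Delta)\ge c/2$ with probability tending to one, by proving $\|\Delta-\Lambda\|_{\mathrm{op}}=o_P(1)$. This is where the exponential inequalities for integrals against multivariate Hawkes processes enter, $\rho(\Gamma^+)<1$ furnishing geometric decay of correlations and finite exponential moments, and the assumption $J(\log T)^5/\sqrt T\to 0$ ensuring that the fluctuation of the $J$-dimensional matrix $\Delta$ is negligible.

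\textbf{Step 2: the remainders and translation back.} For the bias, (A5) gives $|\lambda_0^{(1)}(t)-\lambda^\#(t;\theta^*)|\le\varepsilon_T\int_{t-A}^{t-}\sum_{m}\mathrm{d}N^{(m)}_u$; since this error is, up to an $L^2$-negligible part, carried by functions spanned by the design, a second-moment estimate using the $L^2$-stability of the $\psi_j$ together with the finite moments and bounded covariance density of $N$ gives $\|R_1\|=O_P(\varepsilon_T)$. For the noise, each $R_{2,i}$ is an integral of the predictable process $b_i$ against $M^{(1)}$, so $\mathbb E[R_{2,i}R_{2,i'}]=\frac1{T^2}\mathbb E\int_0^T b_i(t)b_{i'}(t)\lambda_0^{(1)}(t)\,\mathrm{d}t$, and summing over $i$ and bounding $\sum_i\mathbb E[b_i(t)^2\lambda_0^{(1)}(t)]$ (a short second-moment computation using $\|\psi_j\|=1$ and the finite moments of $\lambda_0^{(1)}$) yields $\mathbb E\|R_2\|^2=O(J/T)$, hence $\|R_2\|=O_P(\sqrt{J/T})$. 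Combining with Step~1 gives $\|\hat\theta-\theta^*\|=O_P(\varepsilon_T+\sqrt{J/T})$. Finally $\hat\nu-\nu_0=(\hat\theta_0-\theta_0^*)+(\theta_0^*-\nu_0)$ with the second term $0$ (take $\theta_0^*=\nu_0$, which is constant); and by $L^2$-stability $\int\big\|\sum_j(\hat\beta_j-\beta_j^*)\psi_j(u)\big\|^2\mathrm{d}u\le C\|\hat\beta-\beta^*\|^2=O_P(\varepsilon_T^2+J/T)$, so adding the $O(\varepsilon_T)$ approximation error from (A5) and restricting to the $m$-th coordinate gives the claimed rate for each $\mu^{(1,m)}$.

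\textbf{Main obstacle.} The only non-routine ingredient is the uniform-in-$J$ lower bound $\lambda_{\min}(\Delta)\ge c/2$ of Step~1: the deterministic bound on $\lambda_{\min}(\Lambda)$ is the elementary computation above, but the operator-norm concentration $\|\Delta-\Lambda\|_{\mathrm{op}}=o_P(1)$ over the growing dimension requires the full strength of the exponential concentration inequalities available for stable Hawkes processes, and it is precisely this that dictates the assumption $J(\log T)^5/\sqrt T\to 0$.
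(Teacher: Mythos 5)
Your overall plan is exactly the paper's: the paper itself says the corollary follows from the proof of Theorem~\ref{theo1} ``by using some simplifications,'' and your outline --- drop the kernel $K$ and bandwidth $h$ so the effective sample size is $T$ instead of $hT$, keep only $K=1$, write $\hat\theta-\theta^*=\Delta^{-1}(R_1+R_2)$ with a bias and a noise piece (the paper's $\hat\tau_{\cdot,B}$ and $\hat\tau_{\cdot,A}$), prove a uniform lower bound on $\mathbb{E}[\Delta]$, and control $\|\Delta-\mathbb{E}[\Delta]\|_{\mathrm{op}}$ by blocking and Bernstein under $J(\log T)^5/\sqrt{T}\to 0$ --- is the right specialisation of Lemmas~\ref{lem5}--\ref{lem9}.

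There is, however, a genuine gap in the deterministic part of your Step~1. You claim
$\mathrm{Var}\bigl(\int_{t-A}^{t-}\phi(t-u)^\intercal\,\mathrm{d}N_u\bigr)\ge\sum_m\rho^{(m)}\int_0^A\phi^{(m)}(s)^2\,\mathrm{d}s$
on the grounds that the diagonal contribution equals the right-hand side and ``the reduced covariance of a Hawkes process with nonnegative kernels is nonnegative by the cluster representation.'' The covariance density $c(u-v)$ is indeed pointwise nonnegative, but that does \emph{not} make the off-diagonal bilinear form $\int\!\!\int\phi(u)^\intercal c(u-v)\phi(v)\,\mathrm{d}u\,\mathrm{d}v$ nonnegative once $\phi$ changes sign --- and a linear combination $\phi=\sum_j w_j\psi_j$ of B-splines with mixed signs in $w$ generically does change sign. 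Concretely, for a one-dimensional Hawkes process with a narrow kernel concentrated near a lag $t_0$ the Bartlett spectrum $\hat c(\omega)=\rho\,(|1-\hat\mu(\omega)|^{-2}-1)$ is negative near $\omega=\pi/t_0$, so the quadratic form is negative there; the diagonal term does not dominate the variance. So nonnegativity of the reduced covariance buys nothing here, and the operator-norm concentration of Step~1 would then be applied to a matrix $\mathbb{E}[\Delta]$ for which you have not actually shown $\lambda_{\min}\ge c$.

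The correct route --- and the one the paper takes in Lemmas~\ref{lem4}--\ref{lem6} --- is to use the $\chi$-expansion $\mathrm{d}N_t=\Lambda\,\mathrm{d}t+(\delta+\chi*)\,\mathrm{d}M_t$ to rewrite the variance as the $\Sigma$-weighted $L^2$-norm of $g+\chi_0^\intercal * g$, observe that $I+T_{\chi_0}=(I-T_{\mu_0})^{-1}$, and then use the spectral-radius condition $\rho(\Gamma^+)<1$ to bound $\|I-T_{\mu_0}\|$ (and hence bound the resolvent from below), giving $\|g+\chi_0^\intercal * g\|\ge c\|g\|$. It is precisely $\rho(\Gamma^+)<1$, not pointwise positivity of $c$, that drives the lower bound on $\lambda_{\min}(\mathbb{E}[\Delta])$. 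If you replace the ``nonnegative reduced covariance'' step by this Neumann-series argument, the rest of your sketch goes through and matches the paper's proof in every essential respect.
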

Up to an additional log factor this result can also be proved along the lines of arguments used in 
\cite {JR-BR2015}. There an additional log factor appears because adaptive LASSO estimation is considered. 

\section{Proofs} \label{secproofs}

In our proofs the quantities $C, C^*, C_1,...$ are positive constants that are chosen large enough and $c, c^*, c_1,...$ are strictly positve constants that are chosen small  enough. The same value names will be used for different constants, also in the same formula.   For simplicity 
we assume  that $K=1$. All arguments go through for $K>0$ at the cost of a more complex notation. Then, our estimator $\left (\hat \nu^* , \hat \mu^*\left (\cdot \right)\right )$ of $\left (\nu_0^* , \mu^*_0\left (\cdot \right)\right )$ is defined as 
\begin{eqnarray*}
\hat \mu^*(u) = \sum_{j=1} ^J \hat \beta_j \psi_j(u),
\end{eqnarray*}
where for  $\hat \beta  = (\hat \beta_{0},...,\hat \beta_{J})^\intercal$ with some bandwidth $h \to 0$ and basis dimension $J \to \infty$
\begin{eqnarray*}
\rho(\beta) &=&- \frac 2 T \int \lambda^\#(t;  \beta)  h^{-1} K\left ( \frac {t-t_0} {Th}\right ) \mbox{d}N^{(1)}_t \\
&& \qquad+  \frac 1 T \int \lambda^\# (t; \beta)^2 h^{-1} K\left ( \frac {t-t_0} {Th}\right ) \mbox{d}t 
\end{eqnarray*}
is minimized. Here, as above,  $\psi_1, ..., \psi_J$ is a B-spline basis for $d$-dimensional functions on $[0,A]$ and $\lambda^\# (t;  \beta) =  \beta_0 + \sum_{j=1}^J  \beta_j \int_{t-A}^{t-} \psi_j(t-u)^\intercal \mbox{d}N_u$.
Note that now
$
\rho(\beta) =- 2 \hat \tau^\intercal \beta + \beta^\intercal \Delta \beta$,
where $$\Delta = \left(\begin{array}{cc}1 & \delta^\intercal \\ \delta & \Delta^*\end{array}\right)$$ with
 \begin{eqnarray*}
\delta&=&\left ( \frac 1 {Th} \int_{t_0 - Th}^{t_0 + Th}\int_{t-A}^{t-} \psi^\intercal_j(t-u)  K\left ( \frac {t-t_0} {Th}\right ) \mbox{d} N_u \mbox{d} t\right )_{j=1}^J,  \\
\Delta^* &=&\left ( \frac 1 {Th} \int_{t_0 - Th}^{t_0 + Th}\int_{t-A}^{t-}\int_{t-A}^{t-} K\left ( \frac {t-t_0} {Th}\right ) \psi^\intercal_j(t-u)   \mbox{d} N_u\  \psi^\intercal_k(t-v) \mbox{d} N_v \mbox{d} t\right )_{k,j=1}^J.
\end{eqnarray*}
and, where 
$\hat \tau = \left(\hat \tau_1, \hat \tau_2\right)^\intercal$ with
 \begin{eqnarray*}
\hat \tau_1 &=& \frac 1 {Th} \int_{t_0 - Th}^{t_0 + Th}  K\left ( \frac {t-t_0} {Th}\right ) \mbox{d} N^{(1)}_t, \\
\hat \tau_2 &=&\left ( \frac 1 {Th} \int_{t_0 - Th}^{t_0 + Th}\int_{t-A}^{t-} \psi^\intercal_j(t-u)  K\left ( \frac {t-t_0} {Th}\right ) \mbox{d} N_u  \mbox{d} N_t^{(1)}\right )_{j=1}^J.
\end{eqnarray*}Note that 
$$\hat \beta = \Delta^{-1} \hat \tau$$
as long as $\Delta$ is invertible.

Our proofs make use of the cluster representations of Hawkes processes, see \cite{HO1974} and \cite{R-BR2007}. This representation has also been used in the statistical analysis of nonparametric estimators of stationary Hawkes processes in \cite{JR-BR2015} and of locally  stationary Hawkes processes in \cite{RvSS2016}, \cite{RvSS2017} whereas the definition of locally  stationary Hawkes processes introduced in \cite{RvSS2016}, \cite{RvSS2017} differs from the notion studied here. We now define a cluster representation for our non-stationary inhomogeneous process $N$.  The cluster representation for our Hawkes process \eqref{definthawkes} is given by the following independent random variables: $P_x^{(l,m)}$ ($x \in [0,1], 1 \leq l,m \leq d$) that have a Poisson distribution with parameter $p_x^{(l,m)}= \int_0^A \mu^{(l,m)}(s,x+ (s/T)) \mbox{d}s$; $P^{(m)}$ ($1 \leq m \leq d$) that have a Poisson distribution with parameter $p^{(m)}= T \int_0^1 \nu^{(m)}(s) \mbox{d}s$; $X_{x,i}^{(l,m)}$  ($x \in [0,1], 1 \leq l,m \leq d$, $i \in \mathbb{N}$) that have a Lebesgue density $\mu^{(l,m)}(\cdot,x+ (\cdot/T))/p^{(l,m)}_x$; and $Z_i^{(m)}$ ($1 \leq m \leq d$, $i \in \mathbb{N}$) that have a Lebesgue density $\nu^{(m)}(\cdot/T)/p^{(m)}$. The construction of the Hawkes process now starts with the definition of birth dates of socalled immigrants or ancestors. We assume that there are ancestors of type $1 \leq m \leq d$. The birth dates of ancestors of type $m$ are given by a Poisson process with intensity $\nu^{(m)}(\cdot/T)$ or equivalently, as $Z_1^{(m)},...,Z_{P^{(m)}}^{(m)}$. Each ancestor sets up a separate family with descendants of type $1 \leq l \leq d$. The descendants are born iteratively in generations $n \geq 1$. A member of a family that was born at time $S$ and that is of type $m$ has  $P_{S/T}^{(l,m)}$ children of type $l$. They are born at the dates $S+ X_{S/T,1}^{(l,m)}, ..., S+ X_{S/T,P_{S/T}^{(l,m)}}^{(l,m)}$. In the construction we made use of the fact that with probability 1 no two individuals are born at the same date. This follows because our intensity measures are assumed to have Lebesgue densities. In particular, for this reason none of the variables 
$P_x^{(l,m)}$ ($x \in [0,1], 1 \leq l,m \leq d$) and $X_{x,i}^{(l,m)}$  ($x \in [0,1], 1 \leq l,m \leq d$, $i \in \mathbb{N}$) is used twice or more times in the construction, with probability equal to 1.

The cluster construction allows to compare the process $N$ with the homogeneous Hawkes process $\bar N$ that has intensity function
\begin{equation} \label{definthawkessup} \bar \lambda^{(m)}(t) = \bar \nu^{(m)}+ \sum_{l=1}^d\int_{t-A}^{t-} \bar \mu^{(l,m)}\left (t-u\right) \ \mbox{d}\bar N^{(m)}_s\end{equation}
with $\bar \nu^{(m)}= \sup_{x\leq 1} \nu^{(m)}\left (x\right)$ and $\bar \mu^{(l,m)}\left (t\right) = \sup_{x\leq 1} \mu^{(l,m)}\left (t;x\right) $ for $1 \leq l,m \leq d$ and $0 \leq t \leq A$.
By the arguments above, it can be  seen that there exists a strong construction of $N$ and $\bar N$ such that all birth points of $N^{(l)}$ are also birth points of $\bar N^{(l)}$ for $1 \leq l \leq d$. Thus we can carry over the results  of Lemma 1 and of Proposition 2 in \cite {JR-BR2015} that treat homogeneous Hawkes processes to our inhomogeneous Hawkes process and we get the following result.

\begin{lemma} \label{lem1} Make the assumptions of Theorem \ref{theo1}.  There exist $\rho > 0$ and $C> 0$, not depending on $T$, such that
\begin{eqnarray}
\label{boundnumber}
&&\mathbb{E} \left [ e^{\rho W_l }\right ] < C,\\
\label{boundnumber2}
&&\mathbb{E} \left [ e^{\rho N_{[t-A,t]}}\right ] < C
\end{eqnarray}
for $1 \leq l \leq d$ and $t \leq T$. Here $W_l$ is the number of family members in a family with ancestor of type $l$ and $N_{[t-A,t]}$ is the number of points of all types of $N$ in the interval $[t-A,t]$. 
\end{lemma}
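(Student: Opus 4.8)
The plan is to push both bounds onto the homogeneous dominating process $\bar N$ of \eqref{definthawkessup} and then to establish exponential moments for $\bar N$. The cluster construction already yields a coupling in which every birth point of $N^{(l)}$ is a birth point of $\bar N^{(l)}$: the ancestors of $N$ of each type form a thinning of those of $\bar N$ (because $\nu_0^{(m)}(\cdot/T)\le\bar\nu^{(m)}$), and, given an ancestor, the children produced in $N$ form a thinning of those produced in $\bar N$ (because $p_x^{(l,m)}\le\int_0^A\bar\mu^{(l,m)}$), so inductively the whole family of any ancestor of $N$ embeds into the corresponding family of $\bar N$. Hence, in this coupling, $W_l\le\bar W_l$ and $N_{[t-A,t]}\le\bar N_{[t-A,t]}$ pointwise, and since $\bar N$ is stationary it is enough to bound $\mathbb{E}[e^{\rho\bar W_l}]$ and $\mathbb{E}[e^{\rho\bar N_{[0,A]}}]$. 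As $\bar\nu$ and $\bar\mu$ do not depend on $T$, the constants $\rho,C$ produced below will not depend on $T$ either.

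For \eqref{boundnumber}: a family of $\bar N$ is a $d$-type Galton--Watson tree with independent Poisson offspring whose mean matrix is (the transpose of) $\Gamma^+$, hence subcritical by (A1). Conditioning on the first generation, the generating functions $F_l(\rho)=\mathbb{E}[e^{\rho\bar W_l}]$ satisfy
\begin{equation*}
F_l(\rho)=e^{\rho}\exp\Bigl(\sum_{m=1}^d\Gamma^+_{m,l}\,(F_m(\rho)-1)\Bigr),\qquad l=1,\dots,d .
\end{equation*}
At $\rho=0$ this system is solved by $F_l\equiv 1$, and the Jacobian of $F-\Phi(F)$ there equals $I$ minus the offspring mean matrix, which is invertible because the spectral radius of $\Gamma^+$ is strictly below $1$; by the implicit function theorem there is a continuous branch of finite solutions with $F_l(\rho)\le C$ for $|\rho|$ small. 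To see that this branch is the true generating function (and that $\bar W_l$ has a finite exponential moment at all), I would truncate at generation $n$: $\mathbb{E}[e^{\rho\bar W_l^{(n)}}]$ obeys the same recursion with $n-1$ in place of $n$, is nondecreasing in $n$, and stays below $F_l(\rho)$ by induction for $\rho$ small; monotone convergence then gives $\mathbb{E}[e^{\rho\bar W_l}]=F_l(\rho)\le C$. This is Lemma 1 of \cite{JR-BR2015}.

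For \eqref{boundnumber2}: decompose $\bar N_{[0,A]}$ according to the ancestor each point descends from; only ancestors born at times $S\le A$ can contribute. Since every birth delay lies in $[0,A]$, a family rooted at $S<-A$ can place a point in $[0,A]$ only if it has at least $-S/A$ generations, hence only if its size $\bar W$ is at least $-S/A$. Splitting the ancestors into those born in $[-A,A]$ and those born before $-A$, the first group contributes a compound sum of a Poisson number (bounded mean) of independent family sizes, which has an exponential moment for small parameter; the second group contributes $\sum_{S<-A}\bar W_S\mathbf{1}\{\bar W_S\ge -S/A\}$, whose exponential moment follows from the exponential (Campbell) formula for the Poisson ancestor process together with the estimate $\mathbb{E}\bigl[e^{\rho'\bar W}\mathbf{1}\{\bar W\ge y\}\bigr]\le Ce^{-\varepsilon y}$, valid for $\rho'$ small by \eqref{boundnumber} and $\mathbf{1}\{w\ge y\}\le e^{\varepsilon(w-y)}$, which is integrable against the bounded ancestor intensity over $S<-A$. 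Taking $\rho$ small enough to accommodate all contributions gives $\mathbb{E}[e^{\rho\bar N_{[0,A]}}]<C$. This is Proposition 2 of \cite{JR-BR2015}.

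The main obstacle is the bookkeeping in this last step: one must control simultaneously the Poisson fluctuation in the number of ancestors, the random sizes of their families, and the random temporal reach of each family, arranging that the exponentially small probability of a far-back ancestor having a large family beats the linear growth in the number of such ancestors, and that one small $\rho$ works throughout. The coupling reduction and the subcritical branching computation are otherwise routine once subcriticality is invoked in the form that $I$ minus the offspring mean matrix is invertible; the whole proof is essentially the transfer of Lemma 1 and Proposition 2 of \cite{JR-BR2015} to the inhomogeneous process via the dominating process $\bar N$.
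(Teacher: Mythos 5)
Your proof is correct and follows the same route as the paper: couple $N$ to the dominating stationary process $\bar N$ from \eqref{definthawkessup} via the cluster construction (so that every birth point of $N^{(l)}$ is a birth point of $\bar N^{(l)}$, hence $W_l\le\bar W_l$ and $N_{[t-A,t]}\le\bar N_{[t-A,t]}$), and then transfer the bounds from Lemma 1 and Proposition 2 of \cite{JR-BR2015} for $\bar N$. The paper's proof is exactly this coupling plus citation; you additionally sketch the internal arguments of those two cited results, but the strategy is the same.
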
 

At this point we would also like to add another result that  follows from the theory of homogeneous Hawkes processes and 
that will be used in the local mathematical analysis of our estimator $\hat \theta$. Denote by 
$T^*_{e,t}$ the last birth date of all types inside all families whose ancestor was born before $t$ and put $T_{e,t}=T^*_{e,t}-t$. This is also called extinction time. For the homogeneous process $\bar N$ the distribution of $T_{e,t}$ does not depend on $t$ and it holds that $\mathbb{P} [T_{e,t} \geq s] \leq \sum _{l=1} ^d \frac { \bar \nu^{(l)}}{ \rho_l} \mathbb{E} \left [ e^{\rho_l W_l }\right ] e ^{-\rho_l s}$, see the proof of Proposition 3 in  \cite {JR-BR2015}. Again using the above strong approximation, we can carry over this result to our  inhomogeneous Hawkes process and we get the following lemma. 
\begin{lemma} \label{lem2} Make the assumptions of Theorem \ref{theo1}. We have for some constants $C, \rho^* > 0$ that do not depend on $T$
\begin{eqnarray}
\label{boundnumber3}
&&\mathbb{P} [T_{e,t} \geq s] \leq C e ^{-\rho^* s}\end{eqnarray}
for $t \leq T$ and $s \geq 0$.
\end{lemma}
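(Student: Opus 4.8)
\emph{Proof proposal.} The plan is to obtain \eqref{boundnumber3} by dominating $N$ pathwise by the homogeneous process $\bar N$ in the cluster coupling described above, and then invoking the corresponding bound for $\bar N$. First I would record the monotonicity of the cluster construction implied by (A1): since $\nu^{(m)}(t/T)\leq\bar\nu^{(m)}$ and $p_x^{(l,m)}=\int_0^A\mu^{(l,m)}(s,x+s/T)\,\mathrm ds\leq\int_0^A\bar\mu^{(l,m)}(s)\,\mathrm ds$ for all $x\leq 1$ and $1\leq l,m\leq d$, the immigrant Poisson process of $N^{(m)}$ can be realized as an independent thinning of that of $\bar N^{(m)}$, and, recursively over generations, each offspring point process of $N$ as an independent thinning of the corresponding offspring point process of $\bar N$. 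This is exactly the strong construction already used for Lemma \ref{lem1}, and under it every family of $N$ is literally a sub-tree of a family of $\bar N$ with the same ancestor (the $\bar N$-immigrant from which the $N$-immigrant is retained).

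Next I would use this nesting to compare extinction times. If the ancestor of an $N$-family is born at a time $S\leq t$, the last birth date of all types in that family is at most the last birth date in the enclosing $\bar N$-family, whose ancestor is born at the same time $S\leq t$; taking the supremum over all ancestors born before $t$ gives $T^*_{e,t}\leq\bar T^*_{e,t}$ and hence $T_{e,t}\leq\bar T_{e,t}$ almost surely, where $\bar T_{e,t}=\bar T^*_{e,t}-t$ is the extinction time of $\bar N$. Consequently $\mathbb{P}[T_{e,t}\geq s]\leq\mathbb{P}[\bar T_{e,t}\geq s]$ for every $t\leq T$ and $s\geq 0$, so it only remains to bound the right-hand side.

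Finally, for $\bar N$ I would quote the bound from the proof of Proposition 3 in \cite{JR-BR2015}: the law of $\bar T_{e,t}$ does not depend on $t$ and $\mathbb{P}[\bar T_{e,t}\geq s]\leq\sum_{l=1}^d(\bar\nu^{(l)}/\rho_l)\,\mathbb{E}[e^{\rho_l W_l}]\,e^{-\rho_l s}$; the mechanism there is that the temporal span of a family with ancestor of type $l$ is at most $A$ times the number $W_l$ of its members, so that integrating the immigrant intensity against the family-span tail and applying Markov's inequality to $W_l$ produces the prefactor $\bar\nu^{(l)}/\rho_l$ and the exponential rate $\rho_l$. Shrinking $\rho$ if necessary so that Lemma \ref{lem1} applies with each of the $\rho_l$, the exponential moments $\mathbb{E}[e^{\rho_l W_l}]$ are finite by \eqref{boundnumber}; setting $\rho^*=\min_{1\leq l\leq d}\rho_l>0$ and $C=\sum_{l=1}^d(\bar\nu^{(l)}/\rho_l)\,\mathbb{E}[e^{\rho_l W_l}]$ then yields \eqref{boundnumber3}, uniformly in $t\leq T$ because the bound for $\bar N$ does not involve $t$.

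The only genuinely delicate point is that the domination must hold family-by-family, not merely for marginal counts of points: one has to check that the thinning is consistent through all generations, so that the whole descendant tree of an $N$-ancestor sits inside the $\bar N$-tree rooted at the same point. Granting this — which is precisely the strong construction already invoked for Lemma \ref{lem1} — the remainder is the one-line stochastic comparison above combined with the homogeneous estimate.
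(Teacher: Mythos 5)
Your proposal matches the paper's argument: both rely on the strong coupling already constructed for Lemma \ref{lem1}, under which every family of $N$ is nested inside a $\bar N$-family with the same ancestor birth time, giving $T_{e,t}\leq\bar T_{e,t}$ pathwise; both then invoke the exponential tail bound for the extinction time of the homogeneous process $\bar N$ from the proof of Proposition 3 in \cite{JR-BR2015}. You spell out the generation-by-generation thinning and the family-level (rather than merely point-level) domination more explicitly than the paper, which states it in one sentence as ``again using the above strong approximation,'' but the logic and the constants ($\rho^*=\min_l\rho_l$, $C=\sum_l(\bar\nu^{(l)}/\rho_l)\mathbb{E}[e^{\rho_l W_l}]$) are the same.
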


For the study of the terms $\delta$ and $\Delta$ we will use that these quantities can be approximated by sums of independent random variables. For this aim we will use a construction that also has been used in  \cite {JR-BR2015} and \cite{R-BR2007}  for the study of homogeneous Hawkes processes. For $x^*, x $ fixed, suppose  that $N_{q,n}$ ($q \in \mathbb{N}, n \in \mathbb{Z}$) are independent Hawkes processes with intensity function
\begin{eqnarray*} \lambda_{q,n}^{(l)}(t)&=&  \lambda_{n}^{(l)}(t) = \nu^{(m)}\left (\frac t T\right) \mathbb{I}_{[x^*+2nx-x\leq t <x^*+ 2nx+x]} \\ && \qquad + \sum_{m=1}^d\int_{t-A}^{t-} \mu^{(l,m)}\left (t-s;\frac t T\right) \ \mbox{d}N^{(m)}_{q,n;s}\end{eqnarray*}
for $ 1 \leq l \leq d$.These are Hawkes processes that only contain families with ancestors born in the interval $[x^*+2nx-x, x^*+2nx+x)$. Put $N^+= \sum_{-\infty < n < \infty} N_{0,n}$ and $N_q= \sum_{-\infty < n < q-1} N_{q,n} + N_{0,q}$. It holds:
The Hawkes process $N^+$ has the same distribution as $N$. The ancestors of the Hawkes process $N_q$ are all born before $x^*+ 2qx+x$ and the process $N_q$ has intensity function 

\begin{eqnarray}
\label{hawkesstrong1}  \nu^{(l)}\left (\frac t T\right) \mathbb{I}_{[- \infty <  t <x^*+ 2qx+x]} + \sum_{m=1}^d\int_{t-A}^{t-} \mu^{(l,m)}\left (t-s;\frac t T\right) \ \mbox{d}N^{(m)}_{q;s}\end{eqnarray}
for $ 1 \leq l \leq d$. Furthermore, it holds that the processes $N_q$ ($q \geq 1$) are independent. We now put for $a,x$ fixed with $0< a < x$ \begin{eqnarray}
\label{hawkesstrong2}M_q^{x^*,x}= N_q\big | _{[x^*+ 2qx-a, x^*+2qx+x)}.\end{eqnarray}
These are independent processes for $q \in \mathbb{N}$. 
With the same arguments as in Section 3.1 of \cite{R-BR2007} for one-dimensional  homogeneous Hawkes processes one gets that 
\begin{eqnarray*}
\left | \mathbb{P} [M_q^{x^*,x} \not = N^+ \big | _{[x^*+2qx-a,x^*+ 2qx+x)} \in {\cal A}]\right |&\leq& 2 \mathbb{P} [T_{e,x^*+2qx-x} \geq x-a]\\
\nonumber & \leq& 2 C e ^{-\rho^* (x-a)}, \end{eqnarray*}
where \eqref{boundnumber3} has been used. 
In particular, we have that for measurable sets $\cal A$
\begin{eqnarray*}
\left | \mathbb{P} [M_q^{x^*,x} \in {\cal A}]-  \mathbb{P} [N\big | _{[x^*+2qx-a,x^*+ 2qx+x)} \in {\cal A}]\right |&\leq&  2 C e ^{-\rho^* (x-a)}. \end{eqnarray*}
By a small extension of the arguments one gets the following lemma.

\begin{lemma} \label{lem3} Make the assumptions of Theorem \ref{theo1}. There exists a Hawkes process $N^+$ that has the same distribution as $N$ with the following property. For a finite subset $I$ of $\mathbb{N}$, $x^*\in [0,1)$, and $x >a > 0$ there exist independent Hawkes processes $N_q$ for $q \in I$ with intensity function \eqref{hawkesstrong1} such that for the processes $M_q^{x^*,x}$ defined in \eqref{hawkesstrong2} it holds that \begin{eqnarray}
\label{boundnumber4} \mathbb{P} [M_q^{x^*,x} \not =N^+\big | _{[x^*+2qx-a,x^*+ 2qx+x)}\ \mathrm{ for}\ \mathrm{ some }\  q \in I]&\leq& 2 C |I| e ^{-\rho^* (x-a)}.\end{eqnarray}
\end{lemma}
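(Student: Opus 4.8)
The plan is to reduce Lemma \ref{lem3} to the single‑index decoupling bound already derived in the text, namely the estimate
$$\left | \mathbb{P} [M_q^{x^*,x} \not = N^+ \big | _{[x^*+2qx-a,x^*+ 2qx+x)} \in {\cal A}]\right |\leq 2 C e ^{-\rho^* (x-a)},$$
and then upgrade it from a statement about a \emph{single} $q$ to a statement about all $q$ in the finite index set $I$ simultaneously, paying the advertised factor $|I|$ through a union bound. The construction of the processes $N_q$ and the coupled process $N^+$ is exactly the one introduced before the lemma: $N^+=\sum_{n}N_{0,n}$ built from the strip‑localized independent Hawkes processes $N_{q,n}$, and $N_q=\sum_{n<q-1}N_{q,n}+N_{0,q}$, which is supported on ancestors born before $x^*+2qx+x$, has intensity \eqref{hawkesstrong1}, and — crucially — is independent across $q$ because the strips $[x^*+2qx-x,x^*+2qx+x)$ carrying the "new" ancestors are disjoint for distinct $q$. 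The process $M_q^{x^*,x}$ is the restriction of $N_q$ to the window $[x^*+2qx-a,x^*+2qx+x)$, and these inherit independence over $q\in I$.

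The key steps, in order, are as follows. First, fix $q\in I$ and observe that $N^+$ and $N_q$ agree on the window $[x^*+2qx-a,x^*+2qx+x)$ \emph{unless} some family whose ancestor was born in a strip with index $n<q$ (in $N^+$) or some family from the strips $n<q-1$ together with $N_{0,q}$ (in $N_q$) has a descendant landing in that window; by the construction, the only way this can fail is if a family with ancestor born before $x^*+2qx-x$ has extinction time reaching into $[x^*+2qx-a,x^*+2qx+x)$, i.e. $T_{e,x^*+2qx-x}\geq x-a$. Second, bound the probability of that event using Lemma \ref{lem2}: $\mathbb{P}[T_{e,x^*+2qx-x}\geq x-a]\leq C e^{-\rho^*(x-a)}$, where we note $\rho^*$ and $C$ do not depend on $T$ nor on $q$. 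The factor $2$ comes, as in the displayed single‑index estimate, from symmetrizing over the two processes (a mismatch can be caused either by a family that $N^+$ "sees" but $N_q$ truncates, or vice versa). Third, apply the union bound over $q\in I$:
$$\mathbb{P}\bigl[M_q^{x^*,x}\neq N^+\big|_{[x^*+2qx-a,x^*+2qx+x)}\ \text{for some}\ q\in I\bigr]\leq\sum_{q\in I}\mathbb{P}\bigl[M_q^{x^*,x}\neq N^+\big|_{[x^*+2qx-a,x^*+2qx+x)}\bigr]\leq 2C|I|e^{-\rho^*(x-a)},$$
which is \eqref{boundnumber4}. The independence assertion for the $N_q$, $q\in I$, is then just a restatement of the independence already established for the $N_q$, $q\geq 1$.

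The main obstacle — really the only nonroutine point — is the bookkeeping that justifies the single‑index mismatch event being contained in $\{T_{e,x^*+2qx-x}\geq x-a\}$: one has to be careful about which strips of ancestors are present in $N^+$ versus in $N_q$, and to check that the only discrepancy affecting the window $[x^*+2qx-a,x^*+2qx+x)$ stems from ancestors born strictly before $x^*+2qx-x$ (ancestors born in the current strip are common to both, and contribute points only after $x^*+2qx-x$, hence the window is correctly reproduced up to such long‑lived old families). This is exactly the argument of Section 3.1 of \cite{R-BR2007} carried over to the $d$‑dimensional inhomogeneous setting, the only change being that the extinction‑time tail bound is now the $T$‑uniform bound of Lemma \ref{lem2} rather than the stationary one. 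Once that containment is granted, the rest is a one‑line union bound and the constants are inherited verbatim.
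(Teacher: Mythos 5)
Your proposal is correct and matches the paper's intended argument: the paper simply asserts the lemma follows ``by a small extension of the arguments,'' and the extension you supply --- retracing the extinction-time coupling bound for a single $q$ from Lemma~\ref{lem2} and Section~3.1 of \cite{R-BR2007}, then taking a union bound over $q\in I$ and inheriting independence of the $M_q^{x^*,x}$ from the disjoint ancestor strips --- is exactly that extension. One small imprecision worth noting is in your description of the factor~$2$: it arises not from one process ``truncating'' families the other keeps, but from the fact that $N^+$ and $N_q$ carry \emph{independent copies} of the old-strip families (those with ancestors born before $x^*+2qx-x$), and a mismatch occurs if \emph{either} copy reaches the window; a union bound over these two independent events gives the~$2$.
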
 

We will use the following lemma for the calculation of second order moments of linear statistics of Hawkes processes of the form \eqref{definthawkes}. This lemma generalises results obtained in \cite{BDM2012} for 
stationary homogeneous Hawkes processes.

\begin{lemma} \label{lem4} For a  Hawkes processes $N_t$ of the form \eqref{definthawkes} that fulfils Assumption (A1), it holds for $\Lambda(t/T) =  \mathbb{E}\left [ \lambda(t)\right ]$ that:
\begin{eqnarray} \label{boundnumber5aa}
\Lambda\left (\frac t T\right ) &= &  \nu\left (\frac t T\right )  +\int \chi\left (t-s,\frac t T\right )  \nu\left (\frac s T\right )  \mathrm{d} s, \\
 \label{boundnumber5ab}
\lambda(t) &= & \Lambda\left (\frac t T\right ) + \int \chi\left (t-s,\frac t T\right )  \mathrm{d} M_s,\end{eqnarray}
where 
$$  \chi\left (t-s,\frac t T\right ) = \sum_{k=1}^\infty \mu^{(*k)}\left (t-s,\frac t T\right )$$ with $\mu^{(*1)}\left (t-s,\frac s T\right )  = \mu\left (t-s,\frac s T\right ) $ and 
$$\mu^{(*k)}\left (t-s,\frac t T\right ) = \int \mu^{(*(k-1))}\left (t-u,\frac t T\right )\mu \left (u-s,\frac u T\right ) \mathrm {d} u$$
for $k \geq 2$, and  where $M_t$ is the martingale defined by $\mathrm{d} M_t = \mathrm{d} N_t - \lambda(t)  \mathrm{d} t$.
Finally, we have that
\begin{eqnarray} \label{boundnumber5ac}
&&\mathbb{E}\left [\mathrm{d} N_t \mathrm{d} N_{t^\prime}^\intercal  \right]
= \left ( \Lambda\left (\frac t T\right ) \Lambda\left (\frac { t ^\prime} T\right )^\intercal + \Sigma_{t/T}\delta_{t-t^\prime} + \chi \left (t-t^\prime, \frac {t} T \right ) \Sigma _{t^\prime/T}   \right . \\ \nonumber 
&& \qquad \left .+ \Sigma _{t/T} \chi \left (t^\prime-t, \frac {t^\prime} T \right )^\intercal + \int  \chi \left (t-s, \frac {t} T \right ) \Sigma_{s/T}  \chi \left (t^\prime-s, \frac {t^\prime} T \right )^\intercal \mathrm{d} s\right )\mathrm{d}t \mathrm{d} t^\prime,
\end{eqnarray}
where $\Sigma_{t/T}$ is a diagonal matrix with diagonal elements $\Lambda_i(t/T)$.
\end{lemma}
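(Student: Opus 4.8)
\emph{Proof plan.} The plan is to derive all three identities from the basic decomposition $\mathrm{d}N_s=\lambda(s)\,\mathrm{d}s+\mathrm{d}M_s$ together with an iteration of the resulting Volterra equation. Plugging this decomposition into \eqref{definthawkes} and taking expectations — for fixed $t$ the integrand $s\mapsto\mu(t-s;t/T)$ on $[t-A,t)$ is deterministic and bounded, and $M$ is a square-integrable martingale under (A1) by Lemma~\ref{lem1}, so the stochastic integral against $M$ has mean zero — gives the renewal-type equation $\Lambda(t/T)=\nu(t/T)+\int\mu(t-s;t/T)\Lambda(s/T)\,\mathrm{d}s$. I would then iterate: the $k$-fold substitution of the right-hand side into itself reproduces exactly the iterated kernel $\mu^{(*k)}$ from the statement (after relabelling the integration variables one checks that the "peel from the inside" recursion $\mu^{(*k)}(t-s,t/T)=\int\mu^{(*(k-1))}(t-u,t/T)\mu(u-s,u/T)\,\mathrm{d}u$ equals the iterated integral $\int\cdots\int\mu(t-u_1,t/T)\prod_{i\ge 2}\mu(u_{i-1}-u_i,u_{i-1}/T)\,\mathrm{d}u_i$, with $u_k=s$), and the remainder after $k$ steps is $\int\mu^{(*(k+1))}(t-s,t/T)\Lambda(s/T)\,\mathrm{d}s$. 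Since all $\mu_0^{(l,m)}\ge 0$ and $\int_0^{kA}\mu^{(*k)}(u,\cdot)\,\mathrm{d}u\le(\Gamma^+)^k$ entrywise, Assumption (A1) (spectral radius of $\Gamma^+$ below $1$) makes this remainder vanish geometrically, while $\Lambda$ is bounded by comparison with the stationary dominating process $\bar N$ of \eqref{definthawkessup} (equivalently via Lemma~\ref{lem1}). This proves \eqref{boundnumber5aa}, shows that $\chi=\sum_k\mu^{(*k)}$ is a well-defined nonnegative kernel with $\int_0^\infty\chi(u,x)\,\mathrm{d}u$ bounded uniformly, and yields the fixed-point identity $\chi=\mu+\mu*\chi$.

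For \eqref{boundnumber5ab} I would subtract \eqref{boundnumber5aa} from \eqref{definthawkes} written with $\mathrm{d}N=\lambda\,\mathrm{d}t+\mathrm{d}M$, obtaining for $Y(t):=\lambda(t)-\Lambda(t/T)$ the equation $Y(t)=\int\mu(t-s;t/T)Y(s)\,\mathrm{d}s+\int\mu(t-s;t/T)\,\mathrm{d}M_s$. One then checks that $Y(t)=\int\chi(t-s,t/T)\,\mathrm{d}M_s$ solves it, using $\chi=\mu+\mu*\chi$ and a stochastic Fubini argument, which is legitimate since $\mu$ is bounded with support in $[0,A]$ and $M$ has the needed moments by Lemma~\ref{lem1}. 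Uniqueness is not an issue: the kernel being supported on $[0,A]$ means the equation at time $t$ only couples $Y$ over $[t-A,t]$, and iterating contracts the dependence on the remote past at the geometric rate controlled by $\Gamma^+$ (this is where the extinction-time bound of Lemma~\ref{lem2} may alternatively be invoked). Equivalently one may iterate the Volterra equation directly: the $j$-th iterate applied to the forcing term equals $\int\mu^{(*(j+1))}(t-s,t/T)\,\mathrm{d}M_s$, and the resulting series converges in $L^2$ because $\mathbb{E}\,\|\int\mu^{(*k)}(t-s,t/T)\,\mathrm{d}M_s\|^2$ is, by the isometry applied with $\mathrm{d}\langle M,M^\intercal\rangle_s=\mathrm{diag}(\lambda(s))\,\mathrm{d}s$, of order $\|(\Gamma^+)^k\|$ and hence summable.

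For \eqref{boundnumber5ac} I start from the elementary second-moment structure of a multivariate simple point process: for $t\ne t'$ one has $\mathbb{E}[\mathrm{d}N_t\,\mathrm{d}N_{t'}^\intercal]=\mathbb{E}[\lambda(t)\lambda(t')^\intercal]\,\mathrm{d}t\,\mathrm{d}t'$, while the diagonal contributes $\Sigma_{t/T}\,\delta_{t-t'}\,\mathrm{d}t$ since $(\mathrm{d}N^{(i)})^2=\mathrm{d}N^{(i)}$ and, almost surely, no two coordinates jump simultaneously. It then remains to compute $\mathbb{E}[\lambda(t)\lambda(t')^\intercal]$ by inserting \eqref{boundnumber5ab} into both factors: the terms $\Lambda\cdot\mathbb{E}[\int\chi\,\mathrm{d}M]$ vanish, leaving $\Lambda(t/T)\Lambda(t'/T)^\intercal$ plus the expectation of a product of two stochastic integrals against $M$, which I evaluate via $\mathrm{d}\langle M,M^\intercal\rangle_s=\mathrm{diag}(\lambda(s))\,\mathrm{d}s$ and $\mathbb{E}[\mathrm{diag}(\lambda(s))]=\Sigma_{s/T}$. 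The subtle point, and the step I expect to be the main obstacle, is the overlap of the two integrals: since $\int\chi(t-s,t/T)\,\mathrm{d}M_s$ runs over $s\le t$, if (say) $t>t'$ it already includes $\mathrm{d}M_{t'}$, so in addition to the "continuous" overlap $\int\chi(t-s,t/T)\Sigma_{s/T}\chi(t'-s,t'/T)^\intercal\,\mathrm{d}s$ there is an atom at $s=t'$ producing $\chi(t-t',t/T)\Sigma_{t'/T}$, and symmetrically $\Sigma_{t/T}\chi(t'-t,t'/T)^\intercal$ when $t<t'$. Assembling these with the diagonal term and the convention $\chi(u,\cdot)=0$ for $u<0$ (valid because $\mu$ is supported on $[0,A]$) gives \eqref{boundnumber5ac}. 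The delicate bookkeeping here — treating the mixed jump/compensated martingale $M$ rigorously via a localization argument so that the isometry and the Fubini interchanges are justified, and keeping track of which of the boundary terms $\chi\Sigma$, $\Sigma\chi^\intercal$ occurs — is where most of the effort lies; the time-inhomogeneity introduces no new idea but forces one to replace the usual stationary-Hawkes spectral estimates by the uniform-in-$T$ decay coming from (A1) and Lemmas~\ref{lem1}--\ref{lem2}.
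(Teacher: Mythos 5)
For \eqref{boundnumber5aa} and \eqref{boundnumber5ab} your argument is correct and essentially the paper's: you iterate the Volterra equation, control the remainder with the spectral-radius condition and the dominating stationary process $\bar N$ of \eqref{definthawkessup}, and match the iterated kernels with the $\mu^{(*k)}$ of the statement. The paper iterates the stochastic equation directly and then takes expectations, whereas you first derive the deterministic renewal equation for $\Lambda$ and treat the martingale part separately; this is a harmless reorganization, and the left-peeled fixed-point identity you need for $Y(t)=\int\chi(t-s,t/T)\,\mathrm dM_s$ does follow from the right-peeled recursion in the statement by Fubini, as you indicate.

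Your derivation of \eqref{boundnumber5ac}, however, starts from an identity that is false for Hawkes processes and then compensates with a non-existent ``atom.'' The claim $\mathbb E[\mathrm dN_t\,\mathrm dN_{t'}^\intercal]=\mathbb E[\lambda(t)\lambda(t')^\intercal]\,\mathrm dt\,\mathrm dt'$ for $t\ne t'$ is not the second-moment structure of a point process with a \emph{predictable, history-dependent} intensity. Writing $\mathrm dN_t=\lambda(t)\,\mathrm dt+\mathrm dM_t$ in both factors gives four terms,
\begin{equation*}
\mathbb E[\mathrm dN_t\,\mathrm dN_{t'}^\intercal]=\mathbb E[\lambda(t)\lambda(t')^\intercal]\,\mathrm dt\,\mathrm dt'+\mathbb E[\lambda(t)\,\mathrm dM_{t'}^\intercal]\,\mathrm dt+\mathbb E[\mathrm dM_t\,\lambda(t')^\intercal]\,\mathrm dt'+\mathbb E[\mathrm dM_t\,\mathrm dM_{t'}^\intercal],
\end{equation*}
and the two cross terms do not vanish off the diagonal: for $t>t'$, $\lambda(t)$ is $\mathcal F_{t-}$-measurable and genuinely correlated with $\mathrm dM_{t'}$ because the Hawkes feedback makes $\lambda(t)$ depend on $\mathrm dN_{t'}=\lambda(t')\,\mathrm dt'+\mathrm dM_{t'}$. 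Plugging \eqref{boundnumber5ab} into $\mathbb E[\lambda(t)\,\mathrm dM_{t'}^\intercal]$ and using $\mathbb E[\mathrm dM_s\,\mathrm dM_{t'}^\intercal]=\Sigma_{s/T}\delta_{s-t'}\,\mathrm ds\,\mathrm dt'$ produces exactly the term $\chi(t-t',t/T)\Sigma_{t'/T}\,\mathrm dt\,\mathrm dt'$, and symmetrically for $\Sigma_{t/T}\chi(t'-t,t'/T)^\intercal$. By contrast, the quantity you invoke, $\mathbb E\bigl[\int\chi(t-s,t/T)\,\mathrm dM_s\;\bigl(\int\chi(t'-u,t'/T)\,\mathrm dM_u\bigr)^\intercal\bigr]$, equals the absolutely continuous integral $\int\chi(t-s,t/T)\Sigma_{s/T}\chi(t'-s,t'/T)^\intercal\,\mathrm ds$ and has no atom at $s=t'$; the $\chi\Sigma$ and $\Sigma\chi^\intercal$ contributions simply do not live inside $\mathbb E[\lambda(t)\lambda(t')^\intercal]$. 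To repair the argument, replace your starting identity by the four-term decomposition above, which is what the paper does (following Proposition~2 of \cite{BDM2012}).

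Also check the sign of your iterated-kernel $L^2$ bound: $\mathbb E\|\int\mu^{(*k)}(t-s,t/T)\,\mathrm dM_s\|^2$ is of order $\|(\Gamma^+)^k\|^2$ (a squared operator norm after the isometry), not $\|(\Gamma^+)^k\|$, but since the spectral radius is below one this does not affect summability.
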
 
\begin{proof} Using the hazard defined in \eqref{definthawkessup}  we can  construct a stationary homogeneous process $\bar N_t$ with the property that all jumps of $N_t$ are also jumps of $\bar N_t$. In particular, this implies existence of $N_t$. Note that

\begin{eqnarray*} \lambda(t) &=& \nu\left (\frac t T\right )+  \int \mu \left (t-u, \frac t T\right )\mathrm {d} N_u\\
&=& \nu\left (\frac t T\right )+  \int \mu \left (t-u, \frac t T\right ) \lambda(u) \mathrm {d} u +  \int \mu \left (t-u, \frac t T\right )\mathrm {d} M_u
\\
&=& \nu\left (\frac t T\right )+  \int \mu \left (t-u, \frac t T\right ) \nu\left (\frac u T\right ) \mathrm {d} u +
 \int \mu^{(*2)} \left (t-u, \frac t T\right ) \lambda(u) \mathrm {d} u\\ && \qquad +  \int \sum_{l=1} ^2 \mu^{(*l)} \left (t-u, \frac t T\right )\mathrm {d} M_u
 \\
&=& \nu\left (\frac t T\right )+  \int  \sum_{l=1} ^{k-1} \mu^{(*l)} \left (t-u, \frac t T\right ) \nu\left (\frac u T\right ) \mathrm {d} u\\ && \qquad  +
 \int \mu^{(*k)} \left (t-u, \frac t T\right ) \lambda(u) \mathrm {d} u +  \int \sum_{l=1} ^k \mu^{(*l)} \left (t-u, \frac t T\right )\mathrm {d} M_u
\end{eqnarray*}
for $k\geq 1$. Because $\lambda$ and $ \mu^{(*l)} $ are positive and can be bounded by $\bar \lambda$ and $\bar  \mu^{(*l)} $ where $\bar  \mu^{(*l)} $  is defined as $  \mu^{(*l)} $ but with $N_t$ replaced by $\bar N_t$ we can conclude that the right hand side of the last equation converges in L$_2$ to 
$$ \nu\left (\frac t T\right )+  \int  \chi \left (t-u, \frac t T\right ) \nu\left (\frac u T\right ) \mathrm {d} u 
 +  \int \chi \left (t-u, \frac t T\right )\mathrm {d} M_u.$$
 Thus $\lambda(t) $ is equal to this expression and we conclude \eqref{boundnumber5aa} by taking the expectation of this expression. This also implies \eqref{boundnumber5ab}.
 
 For the proof of \eqref{boundnumber5ac} one proceeds similarly as in the proof of Proposition 2 in  \cite{BDM2012}. Note that: $$\mathbb{E}\left [\mathrm{d} N_t \mathrm{d} N_{t^\prime}^\intercal  \right] = I_1+...+I_4$$
 with \begin{eqnarray*} I_1= \mathbb{E}\left [\mathrm{d} M_t \mathrm{d} M_{t^\prime}^\intercal  \right] 
 = \Sigma_{t/T} \delta_{t-t^\prime} \mathrm{d}t \mathrm{d} t^\prime,
 \end{eqnarray*}
  \begin{eqnarray*} I_2&=& \mathbb{E}\left [\lambda(t) \mathrm{d} M_{t^\prime}^\intercal  \right] \mathrm{d} t\\
 &=& \mathbb{E}\left [\left\{ \Lambda\left (\frac t T\right )+ \int \chi\left ( t-s, \frac t T\right ) \mathrm{d} M_s\right \} \mathrm{d} M_{t^\prime}^\intercal  \right] \mathrm{d} t \\
 &=&  \int \chi\left ( t-s, \frac t T\right )\Sigma_s \delta_{s-t^\prime}\mathrm{d} s   \mathrm{d}t \mathrm{d} t^\prime \\
 &=&  \chi\left ( t-t^\prime, \frac {t} T\right )\Sigma_{t^\prime}   \mathrm{d}t \mathrm{d} t^\prime  ,\\ I_3 &=& \Sigma_{t} \chi\left ( t^\prime-t, \frac {t^\prime} T\right ) ^\intercal  \mathrm{d}t \mathrm{d} t^\prime  ,\\
 I_4&=& 
 \mathbb{E}\left [\lambda(t)\lambda (t^\prime)^\intercal \right ]  \mathrm{d}t \mathrm{d} t^\prime \\
 &=& 
 \mathbb{E}\left [\left\{ \Lambda\left (\frac t T\right ) + \int \chi\left (t-s,\frac t T\right ) \mathrm{d} M_s\right \}\lambda (t^\prime)^\intercal \right ]  \mathrm{d}t \mathrm{d} t^\prime \\
 &=& \Lambda\left (\frac t T\right ) \Lambda\left (\frac {t^\prime} T\right )^\intercal  \mathrm{d}t \mathrm{d} t^\prime \\
 && \qquad +  \mathbb{E}\left [\left\{ \int \chi\left (t-s,\frac t T\right ) \mathrm{d} M_s\right \}\left\{ \int \chi\left (t^\prime -u,\frac {t^\prime} T\right )^\intercal  \mathrm{d} M_u\right \} \right ]  \mathrm{d}t \mathrm{d} t^\prime \\
 &=& \Lambda\left (\frac t T\right ) \Lambda\left (\frac {t^\prime}T\right )^\intercal  \mathrm{d}t \mathrm{d} t^\prime + \int \chi\left (t-s,\frac t T\right ) \Sigma_{s/T}\chi\left (t^\prime -s,\frac {t^\prime} T\right )^\intercal  \mathrm{d} s  \mathrm{d}t \mathrm{d} t^\prime.
  \end{eqnarray*}
This shows equation \eqref{boundnumber5ac}.
\end{proof}

By application of the last lemma we get the following result. 

\begin{lemma} \label{lem5} Make the assumptions of Theorem \ref{theo1}. For functions $g: [0,A] \to \mathbb{R}^d$ and $\nu \in \mathbb{R}^d$ it holds that:
\begin{eqnarray} \label{boundnumber5newa}
R(g,\nu)  
&= &\frac 1 {Th} \int_{t_0 -Th}^{t_0+Th}  \left \|\nu + \int_{t-A}^{t-}g^\intercal(t-u)\Lambda\left (\frac  u T\right) \mathrm{d} u \right \|^2K\left ( \frac {t-t_0} {Th}\right ) \mathrm{d} t \\ \nonumber 
&& \quad + \frac 1 {Th} \int_{t_0 -Th}^{t_0+Th} \int \left ( g (t-v) + \int  \chi\left ( u-v, \frac u T\right)^\intercal g (t-u)\mathrm {d} u \right )^\intercal\\ \nonumber 
&& \qquad   \Sigma_{v/T}  \left ( g^\intercal (t-v) + \int  \chi\left ( u-v, \frac u T\right) ^\intercal g (t-u)\mathrm {d} u \right )\\ && \qquad \times K\left ( \frac {t-t_0} {Th}\right ) \mathrm d v \mathrm d t \nonumber \end{eqnarray}
for some constant $c>0$ where $$R(g,\nu)  = \frac 1 {Th} \mathbb{E}\left [\int_{t_0 - Th}^{t_0 + Th} \left (\nu + \int_{t-A}^{t-}g^\intercal(t-u)   \mbox{d} N_u \right )^2 K\left ( \frac {t-t_0} {Th}\right )\mbox{d} t\right ].$$
\end{lemma}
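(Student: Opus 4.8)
The plan is to expand the square defining $R(g,\nu)$ and evaluate the resulting first and second moments of $N$ directly from Lemma~\ref{lem4}. Write $Y_t = \nu + \int_{t-A}^{t-} g^\intercal(t-u)\,\mathrm{d}N_u$, so that $R(g,\nu) = \frac{1}{Th}\int_{t_0-Th}^{t_0+Th}\mathbb{E}[Y_t^2]\,K\!\big(\tfrac{t-t_0}{Th}\big)\,\mathrm{d}t$. Since $\psi_1,\dots,\psi_J$, and hence $g$, has support contained in $[0,A]$, one may replace $\int_{t-A}^{t-}$ by integration over all of $\mathbb{R}$ throughout; I use this freely. Then
$$\mathbb{E}[Y_t^2]=\nu^2+2\nu\int g^\intercal(t-u)\,\mathbb{E}[\mathrm{d}N_u]+\int\!\!\int g^\intercal(t-u)\,\mathbb{E}[\mathrm{d}N_u\,\mathrm{d}N_v^\intercal]\,g(t-v),$$
and I substitute $\mathbb{E}[\mathrm{d}N_u]=\Lambda(u/T)\,\mathrm{d}u$ from \eqref{boundnumber5aa} together with the five–term identity \eqref{boundnumber5ac} for $\mathbb{E}[\mathrm{d}N_u\,\mathrm{d}N_v^\intercal]$.

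It then remains to collect terms. The $\Lambda(u/T)\Lambda(v/T)^\intercal$ piece factorises as $\big(\int g^\intercal(t-u)\Lambda(u/T)\,\mathrm{d}u\big)^2$, which combined with the remaining $\nu$–dependent terms produces the perfect square $\big(\nu+\int_{t-A}^{t-} g^\intercal(t-u)\Lambda(u/T)\,\mathrm{d}u\big)^2$, the first term on the right of \eqref{boundnumber5newa}. The diagonal term $\Sigma_{u/T}\delta_{u-v}$ contributes $\int g^\intercal(t-v)\Sigma_{v/T}g(t-v)\,\mathrm{d}v$; the two ``mixed'' terms $\chi(u-v,u/T)\Sigma_{v/T}$ and $\Sigma_{u/T}\chi(v-u,v/T)^\intercal$ contribute, after relabelling $u\leftrightarrow v$ and using that $\Sigma_{\cdot}$ is diagonal hence symmetric, twice $\int\!\!\int g^\intercal(t-u)\chi(u-v,u/T)\Sigma_{v/T}g(t-v)\,\mathrm{d}u\,\mathrm{d}v$; and the last term contributes $\int\!\!\int\!\!\int g^\intercal(t-u)\chi(u-s,u/T)\Sigma_{s/T}\chi(v-s,v/T)^\intercal g(t-v)\,\mathrm{d}u\,\mathrm{d}v\,\mathrm{d}s$. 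One checks — the only purely mechanical step — that the sum of these three equals $\int w_{t,v}^\intercal\Sigma_{v/T}w_{t,v}\,\mathrm{d}v$ with $w_{t,v}=g(t-v)+\int\chi(u-v,u/T)^\intercal g(t-u)\,\mathrm{d}u$, which is exactly the second term on the right of \eqref{boundnumber5newa}. Multiplying by $\frac{1}{Th}K\!\big(\tfrac{t-t_0}{Th}\big)$ and integrating over $t$ finishes the proof.

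An essentially equivalent but slightly more structural argument avoids \eqref{boundnumber5ac}: insert $\mathrm{d}N_u=\lambda(u)\,\mathrm{d}u+\mathrm{d}M_u$ and then $\lambda(u)=\Lambda(u/T)+\int\chi(u-s,u/T)\,\mathrm{d}M_s$ from \eqref{boundnumber5ab}, apply the stochastic Fubini theorem to the resulting iterated stochastic integral, and combine with the direct term $\int g^\intercal(t-u)\,\mathrm{d}M_u$ to obtain the representation $Y_t=b_t+\int w_{t,v}^\intercal\,\mathrm{d}M_v$ with $b_t=\nu+\int g^\intercal(t-u)\Lambda(u/T)\,\mathrm{d}u$ deterministic and $w_{t,v}$ as above. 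Since $b_t$ is deterministic and $M$ a mean-zero martingale, $\mathbb{E}[Y_t^2]=b_t^2+\mathbb{E}\big[(\int w_{t,v}^\intercal\,\mathrm{d}M_v)^2\big]$, and the second summand equals $\int w_{t,v}^\intercal\Sigma_{v/T}w_{t,v}\,\mathrm{d}v$ because the predictable covariation of $M$ is $\langle M^{(i)},M^{(j)}\rangle_t=\delta_{ij}\int_0^t\lambda^{(i)}(s)\,\mathrm{d}s$, $w_{t,v}$ is deterministic, and $\mathbb{E}[\lambda^{(i)}(v)]=\Lambda_i(v/T)$.

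The point that needs care — in either route — is the justification of interchanging expectation with the (possibly stochastic) integrations and of the stochastic Fubini step. Here Lemmas~\ref{lem1} and \ref{lem4} do the work: under (A1) the matrix $\Gamma^+$ has spectral radius below $1$, so $\chi=\sum_{k\ge1}\mu^{(*k)}$ converges in $L^1$ and $\Lambda$ is bounded by \eqref{boundnumber5aa}; and the exponential moment bound \eqref{boundnumber2} gives $\sup_{t\le T}\mathbb{E}[N_{[t-A,t]}^2]<\infty$, whence $\mathbb{E}[Y_t^2]<\infty$. Together with $g$ being bounded with support in $[0,A]$, this makes every integral above absolutely convergent and legitimises all the manipulations; I expect this bookkeeping, rather than the algebra, to be the only part that needs to be written out with some care.
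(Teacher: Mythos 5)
Your proof is correct, and it fills in exactly the calculation the paper leaves implicit (the paper only remarks that Lemma~\ref{lem5} follows ``by application of the last lemma'' without giving details). The first route — expand $\mathbb{E}[Y_t^2]$ and substitute \eqref{boundnumber5aa} and \eqref{boundnumber5ac} — is the one the paper is pointing to; your collection of the five terms from \eqref{boundnumber5ac} into the perfect square $b_t^2$ plus $\int w_{t,v}^\intercal \Sigma_{v/T} w_{t,v}\,\mathrm dv$ is algebraically sound (in particular, the relabelling $u\leftrightarrow v$ and the symmetry of the diagonal $\Sigma$ to merge the two mixed terms is right, as is identifying the triple integral with the ``$\chi\Sigma\chi^\intercal$'' piece after expanding $w^\intercal\Sigma w$). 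The second, martingale-decomposition route via \eqref{boundnumber5ab} and $\langle M^{(i)},M^{(j)}\rangle$ is an equivalent rearrangement and is arguably cleaner since it makes the two summands of \eqref{boundnumber5newa} appear structurally as ``deterministic mean squared'' plus ``martingale variance'' rather than by term-collection. One small remark: you implicitly read the second factor in the paper's displayed formula as $g(t-v)+\int\chi(\cdot)^\intercal g(t-u)\,\mathrm du$ rather than the printed $g^\intercal(t-v)+\cdots$, and you silently drop the stray ``for some constant $c>0$'' clause; both are clearly typos in the statement (the latter leaked in from Lemma~\ref{lem6}), and your reading is the only one that makes the dimensions and the quadratic form $w^\intercal\Sigma w$ well-defined. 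Your integrability bookkeeping via Lemma~\ref{lem1}'s exponential moment bound and the spectral-radius condition in (A1) is the right justification for the Fubini steps.
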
 

We will use this lemma to prove the following result.

\begin{lemma} \label{lem6} Make the assumptions of Theorem \ref{theo1}. For functions $g: [0,A] \to \mathbb{R}^d$ and $\nu \in \mathbb{R}_+$ it holds that:
\begin{eqnarray} \label{boundnumber5newb}
R(g,\nu)  
&\geq &c \left (\|\nu\|^2 + \int_{0}^{A}g^\intercal(t)g(t) \mbox{d} t\right )
\end{eqnarray}
for some constant $c>0$. In particular, it holds that the smallest eigen value of $\mathbb{E}[\Delta] $ is bounded from below and that, thus,  the matrix $\mathbb{E}[\Delta] $ is invertible.\end{lemma}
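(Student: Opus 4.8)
\emph{Proof plan.} Write $R(g,\nu)=R_1(g,\nu)+R_2(g,\nu)$ for the two summands on the right-hand side of \eqref{boundnumber5newa} supplied by Lemma~\ref{lem5}; both are nonnegative because $K\ge0$ and because $\Sigma_{v/T}$ is diagonal with nonnegative diagonal entries $\Lambda_i(v/T)$. Recall also that for $\beta=(\beta_0,\dots,\beta_J)^\intercal$ one has $\beta^\intercal\mathbb E[\Delta]\beta=R\bigl(\sum_{j\ge1}\beta_j\psi_j,\beta_0\bigr)$, so the eigenvalue assertion will follow from \eqref{boundnumber5newb}. The plan is to first bound $\int_0^A g^\intercal g$ from above by a constant times $R_2$ alone, and then recover $\|\nu\|^2$ from $R_1$ by a triangle-inequality bootstrap.

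For the first step, in $R_2$ keep only the part of the inner integral coming from $v\in[t-A,t]$, the remainder being nonnegative. Substituting $w=t-v$ and $w'=t-u$ turns this part into $\int_0^A (\mathcal L_t g)(w)^\intercal\Sigma_{(t-w)/T}(\mathcal L_t g)(w)\,\mathrm dw$, where $(\mathcal L_t g)(w)=g(w)+\int_0^w \chi\!\left(w-r,\tfrac{t-r}{T}\right)^\intercal g(r)\,\mathrm dr$ (the lower limit is $0$ since $\chi(s,\cdot)=0$ for $s<0$). For $t$ in the window and $w\in[0,A]$ one has $\tfrac{t-w}{T}\in[x_0-h-A/T,\,x_0+h]$, which for $T$ large lies in a fixed compact subinterval of $(0,1)$; there, by (A1) and $\chi\ge0$, the diagonal entries satisfy $\Lambda_i(x)\ge\nu_{0,i}(x)\ge c_0>0$, so $\Sigma_{(t-w)/T}\succeq c_0 I$ and
\[
R_2(g,\nu)\ \ge\ c_0\,\frac1{Th}\int_{t_0-Th}^{t_0+Th}\Big(\int_0^A\|(\mathcal L_t g)(w)\|^2\,\mathrm dw\Big)\,K\!\left(\frac{t-t_0}{Th}\right)\mathrm dt .
\]

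The crux is to show that the Volterra operator $\mathcal L_t$ on $L^2([0,A],\mathbb R^d)$ is bounded below, $\|\mathcal L_t g\|_{L^2}\ge c_1\|g\|_{L^2}$, uniformly over $t$ in the window and over $T$ large. In the frozen limit $\mathcal L_t$ becomes $\mathcal L^*=I+\mathcal K^*$, where $\mathcal K^*$ is the Volterra operator with kernel $\chi(w-r;x_0)^\intercal$; the relation $\chi=\mu^{(*1)}+\sum_{k\ge2}\mu^{(*k)}$ together with the convolution rule defining $\mu^{(*k)}$ in Lemma~\ref{lem4} (frozen at $x_0$) translates into $\mathcal L^*=(I-\mathcal M^*)^{-1}$, where $\mathcal M^*$ is the Volterra operator with kernel $\mu(w-r;x_0)^\intercal$. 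Since $\mu_0$ is bounded with support in $[0,A]$, $\mathcal M^*$ is a bounded quasi-nilpotent Volterra operator, hence $\mathcal L^{*-1}=I-\mathcal M^*$ is bounded and $\|\mathcal L^* g\|_{L^2}\ge\|g\|_{L^2}/\|I-\mathcal M^*\|$. It then remains to control the perturbation $\mathcal L_t-\mathcal L^*$: by (A2) the partial derivatives of the $\mu_0^{(l,m)}$ in the second argument are bounded, and this transfers to $\chi=\sum_{k\ge1}\mu^{(*k)}$ since the series converges geometrically, uniformly in the second argument (the matrix $\Gamma^+$ of (A1) has spectral radius $<1$); hence $\|\mathcal L_t-\mathcal L^*\|_{\mathrm{op}}\le C\sup_{|x-x_0|\le h+A/T}\sup_{s\in[0,A]}\|\chi(s,x)-\chi(s,x_0)\|\to0$ uniformly over $t$ in the window, so for $T$ large $\mathcal L_t$ is invertible with $\|\mathcal L_t^{-1}\|\le 2\|I-\mathcal M^*\|$, i.e. $c_1=1/(2\|I-\mathcal M^*\|)$. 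Inserting this and using $\tfrac1{Th}\int_{t_0-Th}^{t_0+Th}K(\tfrac{t-t_0}{Th})\,\mathrm dt=\int_{-1}^1K(y)\,\mathrm dy>0$ yields $R_2(g,\nu)\ge c\int_0^A g^\intercal g$. I expect this freezing/invertibility step — transferring the smoothness of $\mu_0$ to $\chi$ while keeping all constants uniform in $T$ — to be the main obstacle.

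Finally, for $\nu$: by Cauchy--Schwarz, $\bigl\|\int_{t-A}^{t-}g^\intercal(t-u)\Lambda(u/T)\,\mathrm du\bigr\|\le\|\Lambda\|_\infty\sqrt A\,(\int_0^A g^\intercal g)^{1/2}$, so $\|\nu\|^2\le 2\bigl\|\nu+\int_{t-A}^{t-}g^\intercal(t-u)\Lambda(u/T)\,\mathrm du\bigr\|^2+C\int_0^A g^\intercal g$; integrating against $\tfrac1{Th}K(\tfrac{t-t_0}{Th})$ over the window gives $\|\nu\|^2\le 2R_1(g,\nu)+C\int_0^A g^\intercal g\le C\,R(g,\nu)$ by the bound on $R_2$ just obtained. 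Together with $\int_0^A g^\intercal g\le C\,R(g,\nu)$ this is \eqref{boundnumber5newb}. For the last assertion, use $\beta^\intercal\mathbb E[\Delta]\beta=R\bigl(\sum_{j\ge1}\beta_j\psi_j,\beta_0\bigr)\ge c\bigl(\beta_0^2+\beta_{1:J}^\intercal G\,\beta_{1:J}\bigr)$, where $G=(\int\psi_j^\intercal\psi_k)_{j,k}$ is the Gram matrix of the normalized B-spline basis, whose smallest eigenvalue is bounded below by a positive constant uniformly in $J$ by the stability of that basis. Hence $\lambda_{\min}(\mathbb E[\Delta])\ge c'>0$ and $\mathbb E[\Delta]$ is invertible.
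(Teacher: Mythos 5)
Your proof follows the same overall strategy as the paper: split $R=R_1+R_2$ with both summands nonnegative, establish $R_2\ge c\int_0^A g^\intercal g$ by freezing the second argument at $x_0$ (an $O(h)$ error controlled by (A2)) and exploiting the Neumann-series structure $\chi_0=\sum_{k\ge1}\mu_0^{*,(*k)}$, and then recover $\|\nu\|^2$ and the eigenvalue claim. Where you depart is in the key Volterra step. You identify $I+\mathcal K^*=(I-\mathcal M^*)^{-1}$ (the series converges because the spectral radius of $\Gamma^+$ is $<1$) and invert, so the lower bound $\|(I+\mathcal K^*)g\|\ge\|g\|/\|I-\mathcal M^*\|$ is immediate and sign-agnostic. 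The paper instead expands $\|(I+\mathcal K^*)g\|^2$ into $\int g^\intercal g$ plus two cross integrals with kernels $\sum_k\mu^{*,(*k)}$ and $\sum_k\mu^{*,(*2k)}$, and invokes (A1) to conclude the sum is $\ge c_1\int g^\intercal g$; this expansion does not visibly coincide with $\|g\|^2+2\langle g,\mathcal K^*g\rangle+\|\mathcal K^*g\|^2$ and is hard to audit as written, so your inverse-operator route is cleaner and more robust. You also make explicit two steps the paper asserts silently: the Cauchy--Schwarz bootstrap that turns the $R_2$-bound plus nonnegativity of $R_1$ into a lower bound for $\|\nu\|^2$ (the paper only says it suffices to show \eqref{lowb1}), and the passage from the functional inequality \eqref{boundnumber5newb} to $\lambda_{\min}(\mathbb E[\Delta])>0$, which genuinely needs the uniform-in-$J$ lower bound on the Gram matrix of the normalized equi-spaced B-spline basis that you cite. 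Both of those fills are needed for completeness and are handled correctly in your plan.
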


\begin{proof}
For the lemma it suffices to show that 
\begin{eqnarray} 
\label{lowb1}
&& \frac 1 {Th} \int_{t_0 -Th}^{t_0+Th} \int \left ( g (t-v) + \int  \chi\left ( u-v, \frac u T\right)^\intercal g (t-u)\mathrm {d} u \right )^\intercal\\ \nonumber 
&& \qquad   \Sigma_{v/T}  \left ( g^\intercal (t-v) + \int  \chi\left ( u-v, \frac u T\right) ^\intercal g (t-u)\mathrm {d} u \right ) K\left ( \frac {t-t_0} {Th}\right )\mathrm d v \mathrm d t\\&& \geq c  \int_{0}^{A}g^\intercal(t)g(t) \mbox{d} t \nonumber
\end{eqnarray}
for some constant $c>0$. For the proof of this claim note that, because of (A2), with $c>0$ small enough the left hand side of \eqref{lowb1} can be bounded from below by 
\begin{eqnarray*} 
&&c \frac 1 {Th} \int_{t_0 -Th}^{t_0+Th} \int \left ( g (w) + \int  \chi\left ( w-s, \frac {t-s} T\right)^\intercal g (s)\mathrm {d} s \right )^\intercal\\
&& \qquad     \left ( g^\intercal (w) + \int  \chi\left ( w-s, \frac {t-s} T\right) ^\intercal g (s)\mathrm {d} s \right ) K\left ( \frac {t-t_0} {Th}\right )\mathrm d w \mathrm d t \end{eqnarray*}
   \begin{eqnarray*}
 && \geq c \frac 1 {Th} \int_{t_0 -Th}^{t_0+Th} \int \left ( g (w) + \int  \chi_0\left ( w-s\right)^\intercal g (s)\mathrm {d} s \right )^\intercal\\
&& \qquad     \left ( g^\intercal (w) + \int  \chi_0\left ( w-s\right) ^\intercal g (s)\mathrm {d} s \right ) K\left ( \frac {t-t_0} {Th}\right )\mathrm d w \mathrm d t- O(h)
\\
 && = c \int \left ( g (w) + \int  \chi_0\left ( w-s\right)^\intercal g (s)\mathrm {d} s \right )^\intercal\\
&& \qquad     \left ( g(w) + \int  \chi_0\left ( w-s\right) ^\intercal g (s)\mathrm {d} s \right ) \mathrm d w- O(h)\\
 && = c \int g (w)^\intercal g (w)  \mathrm d w +\int  \int g (w)^\intercal  \sum_{k=1}^\infty \mu^{*,(*k)}\left (w-s\right ) g (s)\mathrm {d} s\mathrm d w  \\ && \qquad +\int  \int g (w)^\intercal  \sum_{k=1}^\infty \mu^{*,(*2k)}\left (w-s\right ) g (s)\mathrm {d} s\mathrm d w  - O(h)
  \\
 && \geq c_1 \int g (w)^\intercal g (w)  \mathrm d w   - O(h)\end{eqnarray*}
for $c_1 >0 $ small enough, where 
$$  \chi_0\left (t-s\right ) = \sum_{k=1}^\infty \mu_0^{*,(*k)}\left (t-s\right )$$ with $\mu_0^{*,(*1)}\left (t-s,\right )  = \mu_0^*\left (t-s\right ) $ and 
$$\mu_0^{*,(*k)}\left (t-s\right ) = \int \mu_0^{*,(*(k-1))}\left (t-u\right )\mu_0^* \left (u-s\right ) \mathrm {d} u$$
for $k \geq 2$. Here, we used (A1) to get the last inequality. This shows \eqref{lowb1} and concludes the proof of the lemma.
\end{proof}

We now consider the variables $\delta, \Delta^*, \hat \tau_1$ and $\hat \tau_2$. With the help of Lemma \ref{lem3} with $a=A$ we can approximate these variables by the sum of two terms where each term is the sum of independent variables. Such a splitting device has also been used in \cite{R-BR2007} to prove Hoeffding and Bernstein inequalities for averages of flows induced by stationary Hawkes processes. We start by discussing $\Delta^*$. With $G(t) =(G_{jk}(t))_{1\leq j,k \leq d}$  where
$$G_{jk}(t) = \int_{t-A}^{t-} \int_{t-A}^{t-} K\left ( \frac {t-t_0} {Th}\right ) \psi_j^\intercal (t-u) \mathrm{d} N_u 
 \psi_k^\intercal (t-v) \mathrm{d} N_v \mathbb I_{(t_0 -Th \leq t \leq t_0 + Th)}$$
 we get that \begin{eqnarray*} 
 \Delta ^* - \mathbb{E} [ \Delta ^*]  &=& \frac 1 {Th} \int_{t_0 - Th}^{t_0 + Th} \left \{ G(t) - \mathbb{E} [ G(t)]\right \} \mathrm d t \\&=& \Delta ^*_1 + \Delta ^*_2,
 \end{eqnarray*} 
with 
\begin{eqnarray*} 
 \Delta_1 ^*   &=& \frac 1 Q \sum _{q=0} ^Q \int_{x^* + 2qx}^{x^* + (2q+1)x} \frac Q {Th}  \left \{ G(t) - \mathbb{E} [ G(t)]\right \} \mathrm d t \\ \Delta_2 ^*   &=& \frac 1 Q \sum _{q=0} ^Q \int_{x^* + (2q+1)x}^{x^* + (2q+2)x} \frac Q {Th}  \left \{ G(t) - \mathbb{E} [ G(t)]\right \} \mathrm d t,
 \end{eqnarray*} 
 where $x^* =t_0 - hT$ and where $x$ is a value that depends on $T$ and that we will choose below. Furthermore, $Q$ is the smallest integer larger than $hT/x - 1$. Note that $\Delta_1^*$ has the same distribution as $\Delta_1^+$ which is defined as $\Delta_1^*$ but with $G(t)$ replaced by $G^+(t)$. The function $G^+(t) $ is defined as $G(t)$ but with the counting process $N$ replaced by $N^+$. We also define $G^+_q(t) $ as $G^+(t)$ but with $N^+$ replaced by $M_q^{x^*,x}$. From Lemma \ref{lem3} with $a =A$ we get that with probability $\geq 1 - CQ \exp(-\rho^* (x-A))$  
\begin{eqnarray*} 
 \Delta_1 ^+   &=& \frac 1 Q \sum _{q=0} ^Q \int_{x^* + 2qx}^{x^* + (2q+1)x} \frac Q {Th}  \left \{ G^+(t) - \mathbb{E} [ G(t)]\right \} \mathrm d t \\    &=& \frac 1 Q \sum _{q=0} ^Q \eta _q
 \end{eqnarray*} 
 with $$\eta_q = \int_{x^* + (2q+1)x}^{x^* + (2q+2)x} \frac Q {Th}  \left \{ G_q^+(t) - \mathbb{E} [ G(t)]\right \} \mathrm d t.$$
 The variables $\eta_q$ are mean zero independent random $d \times d$ matrices.  Furthermore, they are bounded as follows. Denote the jump points of the components of $M_q^{x^*,x}$ by $t_1^q, t_2^q,...$. First we have that for $x^* + 2qx \leq t < x^* + (2q+1) x$,  $1 \leq j,j^\prime \leq d$
 \begin{eqnarray} \label{poisa} |G^+_{q,j,j^\prime} (t)| &\leq& C \sum_{k,l\geq 1} \| \psi_j(t-t^q_k)\| \| \psi_{j^\prime}(t-t^q_l)\|\\ \nonumber 
 &\leq & C J \sum _{k,l\geq 1} \sum _{r,r^\prime \geq 1}\mathbb I _{(|t-t^q_k- \tau^r_j| \leq C/J)}\mathbb I _{(|t-t^q_l- \tau^{r^\prime}_{j^\prime}| \leq C/J)},\end{eqnarray}
 where $\tau_j^1, \tau_j^2,...$ and $\tau_{j^\prime}^1, \tau_{j^\prime}^2,...$ are elements of the support of $\psi_j$ or $\psi_{j^\prime}$, respectively. The supports are finite unions of intervals with diameter less than $C/J$. The values are chosen such that exactly one value is taken from each interval. We now construct upper bounds for the number of jump points of the components of $M_q^{x^*,x}$. For this purpose first note that Lemma \ref{lem1} implies that $\mathbb E [\exp(\rho N_{[t-2A,t]}/2)] < 2C$. This implies that $\sup_{0\leq t \leq T} N_{[t-A,t]} < C \log T$ with probability tending to 1. Because $\nu^{(l)}_0$ and $\mu_0^{(l,m)}$ are bounded by Assumption (A1) for $1 \leq l,m\leq d$ we have that $\lambda ^{(l)}(t) \leq C \log T$ with probability tending to 1 for $C$ chosen large enough. With similar argumennts as above we now argue that a homogeneous Poisson proces $\tilde N^q_t$ can be constructed with intensity $\tilde \lambda ^{(m)} (t) \equiv
 C \log T$ such that all jumps of a component of $M_q^{x^*,x}$ are also jumps of the corresponding component of $\tilde N^q $, with probability tending to 1. Denote the jump points of the components of $\tilde N^q $ in the interval $[x^*+ 2qx-A, x^*+2qx+x)$ by $\tilde t_1^q, \tilde t_2^q,...$.
 
Using this argument and \eqref{poisa} we get  that for $ \eta^+_{q;j,j^\prime} = \frac Q {Th}   \int_{x^* + 2qx}^{x^* + (2q+1)x}  G^*_{q,j,j^\prime}(t)\mathrm d t$ 

 \begin{eqnarray} \label{poisb} |\eta^+_{q;j,j^\prime} | &\leq & \frac C {x}   \int_{x^* + 2qx}^{x^* + (2q+1)x} J |G^+_{q,j,j^\prime}(t)|\mathrm d t \\ \nonumber
 &\leq & C  \frac J x
  \int_{x^* + 2qx}^{x^* + (2q+1)x} 
 \sum _{k,l\geq 1} \sum _{r,r^\prime \geq 1}\mathbb I _{(|t-\tilde t^q_k- \tau^r_j| \leq C/J)}\mathbb I _{(|t-\tilde t^q_l- \tau^{r^\prime}_{j^\prime}| \leq C/J)}
 \mathrm d t \\ \nonumber
 &\leq & C  \frac J x
  \int_{x^* + 2qx}^{x^* + (2q+1)x} 
  \sum _{k,l\geq 1} \sum _{r,r^\prime \geq 1}\mathbb I _{(|t-t^q_k- \tau^r_j| \leq C/J)}\mathbb I _{(|t-t^q_l- \tau^{r^\prime}_{j^\prime}| \leq C/J)} \mathrm d t 
 \\ \nonumber
 &\leq & C  \frac J x
  \int_{x^* + 2qx}^{x^* + (2q+1)x} 
 \left ( \sum _{k\geq 1} \sum _{r \geq 1}\mathbb I _{(|t-\tilde t^q_k- \tau^r_j| \leq C/J)}\right )^2  \mathrm d t 
  \\ \nonumber
  && \qquad + C  \frac J x
  \int_{x^* + 2qx}^{x^* + (2q+1)x} 
 \left (   \sum _{l\geq 1}\sum _{r^\prime \geq 1} \mathbb I _{(|t-\tilde t^q_l- \tau^{r^\prime}_{j^\prime}| \leq C/J)}
\right )^2  \mathrm d t 
 .\end{eqnarray}
 The first term on the right hand side of \eqref{poisb} can be bounded by 
 \begin{eqnarray*} 
 &\leq & C \frac J x \sum _{i=1} ^I
  \int_{x^* + 2qx+C(i-1)/J}^{x^* + 2qx+ Ci/J} 
 \left ( \sum _{k\geq 1} \sum _{r \geq 1} \mathbb I _{(|\tilde t^q_k+ \tau^r_j-x^* - 2qx- Ci/J| \leq 2C/J)}\right )^2  \mathrm d t \\
  &\leq & C \frac 1 x \sum _{i=1} ^I Z_{q,i}^2 \end{eqnarray*}
 with $ Z_{q,i}=
  \sum _{k\geq 1} \sum _{r \geq 1}\mathbb I _{(|\tilde t^q_k+ \tau^r_j-x^* - 2qx- Ci/J| \leq 2C/J)} $ and where $I$ is of order $xJ$.
  Note that for $1 \leq q \leq Q$ the variables $Z_{q,1}, Z_{q,3}, Z_{q,5}, ...$ and the variables $Z_{q,2}, Z_{q,4}, Z_{q,6}, ...$ are independent Poisson random variables with parameter of order $\log T/J$. We now use that with $X_j=(Z_{q,2j}^2 - \mathbb E[Z_{q,2j}^2])$ and with constants $C_k$ depending on $k$ we have that $\mathbb E [ X_j^{2k}] \leq C_k \log T / J$. By application of Rosenthals inequality, see \cite{R1970}, we have that $\mathbb E [(X_1+...+X_{I/2})^{2k}] \leq C_k \{I \mathbb E [X_1^{2k}] + (I \mathbb E [X_1^{2}])^{k}\} \leq  C_k (I \log T / J)^k$. Here we assume that $I$ is even. This gives that $\mathbb P (x^{-1} (X_1+...+X_{I/2}) > v) \leq  C_k (\log T/x)^k v^{-2k}$. 
    Because of $|\frac 1 x \sum _{i=1} ^I\mathbb E  [Z_{q,i}^2]| \leq C$ for  $C>0$ large enough we get that
    
    \begin{eqnarray} \label{poisc} &&\mathbb P \left ( |\eta^+_{q;j,j^\prime} | \geq C+v \mbox { for some } 1 \leq q \leq Q, 1 \leq j,j^\prime \leq d \right )\\
    && \qquad  \leq C\left ( \frac {\log T } x \right )^k Q J^2 v ^{-2k} \nonumber \end{eqnarray}
    for $v >0$.

We will use these considerations for the proof of the following lemma.

\begin{lemma} \label{lem7}
Make the assumptions of Theorem \ref{theo1}. Choose  $\epsilon > 0$ small enough such that $Q$ converges to infinity where $Q$ is chosen as the smallest integer larger than $hT^{1-\epsilon} - 1$. Then for $1 \leq j,j^\prime \leq d, 1 \leq q \leq Q$ there exist independent mean zero variables $\tilde \eta_{q,j,j^\prime}$ and $\tilde \eta_{q,j}$ with
\begin{eqnarray*}|\tilde \eta_{q,j,j^\prime}| &\leq& C T^\epsilon, \\
|\tilde \eta_{q,j}| &\leq& C T^\epsilon
\end{eqnarray*}
for some $C>0$ such that for all $\kappa>0$ there exists $C^*>0$ with 
\begin{eqnarray*}
\left | \Delta_{j,j^\prime}^+- \mathbb E[\Delta_{j,j^\prime}^+] - \frac 1 Q \sum_{q=1}^Q \tilde \eta _{q,j,j^\prime}\right | &\leq C^* T^{-\kappa},\\
\left | \delta_{j}^+  - \mathbb E[ \delta_{j}^+] - \frac 1 Q \sum_{q=1}^Q \tilde \eta _{q,j}\right | &\leq C^* T^{-\kappa}
\end{eqnarray*}
with probability $\geq 1- C^*T^{-\kappa}$. Here, $\Delta^+$ is a $d\times d$ random matrix and $\delta^+$ is a $\mathbb R^d$-valued random variable that have the same distribution as $\Delta$ or $\delta$, respectively. 
\end{lemma}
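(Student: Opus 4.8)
The plan is to establish Lemma~\ref{lem7} by truncating the bounded-but-large independent summands $\eta_q^+$ (and their one-dimensional analogues built from $\delta^+$) at level $CT^\epsilon$ and showing the truncation is negligible with overwhelming probability. First I would recall from the discussion preceding the lemma that, on an event of probability $\geq 1 - CQ\exp(-\rho^*(x-A))$, we have the exact identity $\Delta_1^+ = Q^{-1}\sum_{q=0}^Q \eta_q$ with $\eta_q$ independent, mean zero, and similarly $\Delta_2^+ = Q^{-1}\sum_q \eta_q'$ for a second family of independent summands (the even-indexed blocks), and likewise for $\delta^+$. Choosing $x$ of order $T^{\epsilon/2}$, say, makes $Q$ of order $hT^{1-\epsilon/2}$, which by (A4) tends to infinity, and makes $Q\exp(-\rho^*(x-A))$ as well as the bound \eqref{poisc} super-polynomially small in $T$; here one picks $k$ in \eqref{poisc} large enough relative to $\kappa$ and uses $\log T / x \to 0$. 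Thus with probability $\geq 1 - C^*T^{-\kappa}$ every $|\eta_{q;j,j'}^+|$ is already $\leq CT^\epsilon$, and the same argument (it is strictly simpler, involving a single sum $\sum_i Z_{q,i}$ rather than $\sum_i Z_{q,i}^2$) bounds the summands coming from $\delta^+$.

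Next I would define $\tilde\eta_{q,j,j'}$ to be the centered truncation $\eta_{q;j,j'}^+\mathbb{I}_{(|\eta_{q;j,j'}^+|\leq CT^\epsilon)} - \mathbb{E}[\eta_{q;j,j'}^+\mathbb{I}_{(|\eta_{q;j,j'}^+|\leq CT^\epsilon)}]$, and analogously $\tilde\eta_{q,j}$ from the $\delta^+$ construction; these are independent across $q$ within each parity class, bounded by $2CT^\epsilon$, and mean zero by construction. On the intersection of the good event above with the event from Lemma~\ref{lem3}, the untruncated and truncated sums agree exactly, so $Q^{-1}\sum_q \tilde\eta_{q,j,j'}$ differs from $\Delta_{1,j,j'}^+ - \mathbb{E}[\Delta_{1,j,j'}^+]$ only through the centering correction $\mathbb{E}[\eta_{q;j,j'}^+\mathbb{I}_{(|\eta^+|>CT^\epsilon)}]$, which by the exponential integrability from Lemma~\ref{lem1} (the summands are polynomial-in-$\log T$ functionals of Poisson counts, so all moments are controlled and the tail probability beyond $CT^\epsilon$ is super-polynomially small) is itself $O(T^{-\kappa})$ per $q$, hence $O(T^{-\kappa})$ after averaging. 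Combining the two parity classes yields the claimed bound for $\Delta_{j,j'}^+ - \mathbb{E}[\Delta_{j,j'}^+] - Q^{-1}\sum_q\tilde\eta_{q,j,j'}$, and the identical scheme handles $\delta_j^+$. Finally, since $N^+$ has the same law as $N$, $\Delta^+$ and $\delta^+$ have the same distributions as $\Delta$ and $\delta$.

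The main obstacle is bookkeeping the interplay of the several small-probability events and the choice of the free parameters $x$, $k$, and $\epsilon$: one must verify that a single choice makes $Q\to\infty$, keeps $hT^{1-\delta}\to\infty$ compatible with (A4), drives $CQ\exp(-\rho^*(x-A))$ and the Rosenthal-type bound \eqref{poisc} below $T^{-\kappa}$ simultaneously for the prescribed $\kappa$, and still leaves the truncation level $CT^\epsilon$ as stated. The second delicate point is the uniform control of the centering terms $\mathbb{E}[\eta^+\mathbb{I}_{(|\eta^+|>CT^\epsilon)}]$: this requires the tail estimate on $\eta_q^+$ to hold not just with high probability but in expectation, which is why the reduction to Poisson functionals via the dominating process $\tilde N^q$ and Lemma~\ref{lem1} is essential rather than merely convenient. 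Everything else — independence across the parity classes, mean-zero and boundedness of $\tilde\eta$, the translation from $\Delta_1^+,\Delta_2^+$ back to $\Delta^+$ — is routine once these two ingredients are in place.
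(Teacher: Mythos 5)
Your proposal follows essentially the same route as the paper's own argument: you truncate the independent block summands $\eta_{q;j,j'}^+$ at level $CT^\epsilon$, recenter, and show that (i) on the Lemma~\ref{lem3} event and the \eqref{poisc} event the truncated sum equals the untruncated one, and (ii) the centering correction is $O(T^{-\kappa})$ because the tail of $\eta_{q;j,j'}^+$ decays fast enough in expectation, which comes from the Poisson-dominated Rosenthal bound. This is precisely the paper's choice $\tilde\eta_{q,j,j'} = \eta_{q,j,j'}^*\mathbb I_{[|\eta^*|\leq CT^\epsilon]} - \mathbb E[\eta^*\mathbb I_{[|\eta^*|\leq CT^\epsilon]}]$ plus an application of \eqref{poisc} with $k$ large. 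One small inconsistency worth fixing: you propose to take $x$ of order $T^{\epsilon/2}$, but the lemma fixes $Q$ to be the smallest integer $\geq hT^{1-\epsilon}-1$, which, since $Q$ was defined as the smallest integer $\geq hT/x-1$, forces $x=T^\epsilon$ (as the paper indeed uses); taking $x=T^{\epsilon/2}$ would change $Q$ and no longer match the statement, though the exponential decay $Q\exp(-\rho^*(x-A))$ remains super-polynomially small for either choice, so the discrepancy is cosmetic rather than fatal.
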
 
\begin{proof}
Choose $\Delta^+$ as $\Delta$ but with $G(t)$ replaced by $G^+(t)$. With this choice the lemma holds. This can be seen by the considerations made above with the choice
$$ \tilde \eta_{q,j,j^\prime} = \eta_{q,j,j^\prime}^* \mathbb I_{[| \eta_{q,j,j^\prime}^* | \leq C T^\epsilon]} - \mathbb E \left [\eta_{q,j,j^\prime}^* \mathbb I_{[| \eta_{q,j,j^\prime}^* | \leq CT^\epsilon]}\right ].$$
This follows by application of \eqref{poisc} with $x = T^\epsilon$ and with $k>0$ large enough. In particular, one gets from \eqref{poisc} with $k>0$ large enough
 that 
$$
\mathbb E \left [\eta_{q,j,j^\prime}^* \mathbb I_{[| \eta_{q,j,j^\prime}^* | \leq C T^\epsilon]}\right ] \leq C^* T^{-\kappa}$$for all $\kappa >0$ with $C,C^*>0$ large enough. This shows the statements on $\Delta^+$ in the lemma. The statements for $\delta^+$ can be shown by similar arguments for an appropriately chosen $\delta^+$.
\end{proof} 
 \begin{lemma} \label{lem8}
Make the assumptions of Theorem \ref{theo1}. Then it holds uniformly for $\nu \in \mathbb R^d$ and $b_1,...,b_J \in \mathbb R$ with  $g= \sum_{j=1}^J b_j \psi_j$  
\begin{eqnarray*}
&&\left(\begin{array}{c}\nu \\b_1 \\ \vdots \\b_J\end{array}\right)^\intercal \Delta \left(\begin{array}{c}\nu \\b_1 \\\vdots \\b_J\end{array}\right) \\
&&\qquad=\frac 1 {Th} \int_{t_0 - Th}^{t_0 + Th} \left (\nu + \int_{t-A}^{t-}g^\intercal(t-u)   \mbox{d} N_u \right )^2 K\left ( \frac {t-t_0} {Th}\right )\mbox{d} t\\
&&\qquad \geq c \left (\|\nu\|^2 + \int_{0}^{A}g^\intercal(t)g(t) \mbox{d} t\right )
\end{eqnarray*}
with probability tending to one for $c> 0$ small enough. In particular, we have that $$\|\Delta ^{-1} x \| \leq c^{-1} \| x \|$$
for all $x\in \mathbb R ^{J+1}$ with probability tending to one.
\end{lemma}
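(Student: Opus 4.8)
The plan is to treat the displayed equality as built into the definitions of $\Delta$ and $\hat\tau$, and to reduce the quantitative lower bound to an operator-norm concentration statement for $\Delta$ around $\mathbb E[\Delta]$. For the equality, note that $\rho(\beta) = -2\hat\tau^\intercal\beta + \beta^\intercal\Delta\beta$ while also $\rho(\beta) = -\frac2T\int\lambda^\#(t;\beta)h^{-1}K(\frac{t-t_0}{Th})\,\mathrm dN^{(1)}_t + \frac1T\int\lambda^\#(t;\beta)^2 h^{-1}K(\frac{t-t_0}{Th})\,\mathrm dt$, so equating the quadratic-in-$\beta$ parts and substituting $\lambda^\#(t;\beta) = \nu + \int_{t-A}^{t-}g^\intercal(t-u)\,\mathrm dN_u$ (with $\beta_0 = \nu$, $\beta_j = b_j$, $g = \sum_{j=1}^J b_j\psi_j$) yields the formula. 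Since a B-spline basis with equidistant knots has Gram matrix $(\int_0^A\psi_j^\intercal\psi_k)_{j,k}$ with eigenvalues bounded above and below uniformly in $J$, we have $\|\nu\|^2 + \int_0^A g^\intercal g\asymp\|\beta\|^2$, so both the displayed bound and the companion bound $\|\Delta^{-1}x\|\le c^{-1}\|x\|$ (which is just $\|\Delta^{-1}\|_{\mathrm{op}} = \lambda_{\min}(\Delta)^{-1}$) reduce to showing $\lambda_{\min}(\Delta)\ge c$ with probability tending to one. As $\beta^\intercal\mathbb E[\Delta]\beta = R(g,\nu)$ by the identity and the definition of $R$ in Lemma~\ref{lem5}, Lemma~\ref{lem6} supplies $\lambda_{\min}(\mathbb E[\Delta])\ge c_0>0$, so it remains to prove $\|\Delta - \mathbb E[\Delta]\|_{\mathrm{op}}\to_P 0$.

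For that I would invoke Lemma~\ref{lem7}. Replacing $\Delta$ by the equidistributed matrix $\Delta^+$, that lemma gives, on an event of probability at least $1 - C^*T^{-\kappa}$, a decomposition $\Delta^+ - \mathbb E[\Delta^+] = \frac1Q\sum_{q=1}^Q E_q + R$ with $\|R\|_{\mathrm{op}}\le (J+1)C^*T^{-\kappa}$, where the $E_q$ are independent, mean-zero, symmetric $(J+1)\times(J+1)$ matrices assembled from the $\tilde\eta_{q,j,k}$ and $\tilde\eta_{q,j}$ (and a zero $(0,0)$-entry), all of whose entries are bounded by $CT^\epsilon$; $\kappa$ is taken large so that $R$ is negligible. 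I would choose the parameter $\epsilon = \epsilon_T \to 0$ slowly, e.g.\ $\epsilon_T = 2\log\log T/\log T$, which is admissible because $hT^{1-\epsilon_T} = hT/(\log T)^2\to\infty$ still holds (by (A4), $hT\gg J^2(\log T)^{10}\gg(\log T)^{12}$), and which renders $T^{\epsilon_T}$ merely polylogarithmic; then $\|E_q\|_{\mathrm{op}}\le\|E_q\|_F\le L$ almost surely with $L\asymp J(\log T)^2$, and trivially $\|\sum_q\mathbb E[E_q^2]\|_{\mathrm{op}}\le QL^2$. (If one needed to weaken (A4) one could instead bound $\|\sum_q\mathbb E[E_q^2]\|_{\mathrm{op}}\lesssim Q\sup_t\mathbb E[\|\phi^+(t)\|^4]\lesssim QJ^2$ via Lemma~\ref{lem1} and the B-spline geometry, $\phi^+(t)$ collecting the constant slot and the $\int_{t-A}^{t-}\psi_j^\intercal(t-u)\,\mathrm dN^+_u$.)

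Now the matrix Bernstein inequality applied to $\sum_q E_q$ — a sum in dimension $J+1$ — gives, for every $s>0$,
$$\mathbb P\!\left(\Big\|\tfrac1Q\sum_{q=1}^Q E_q\Big\|_{\mathrm{op}} > s\right)\le 2(J+1)\exp\!\left(-\frac{cQ s^2}{L^2 + Ls}\right).$$
With $Q\asymp hT^{1-\epsilon_T}\asymp hT/(\log T)^2$, $L\asymp J(\log T)^2$, and $hT\gg J^2(\log T)^{10}$ together with $\log T/J\to0$ from (A4), one checks that $L^2\log J/Q\to0$; choosing $s\to0$ at a compatible rate makes the right-hand side vanish, so $\|\frac1Q\sum_q E_q\|_{\mathrm{op}}\to_P 0$, and combined with the negligibility of $R$ this gives $\|\Delta - \mathbb E[\Delta]\|_{\mathrm{op}}\to_P 0$ and hence the lemma.

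The step I expect to be the main obstacle is the last one: getting matrix Bernstein to close requires carefully balancing the dimension $J+1$, the almost-sure block size $L$ (inflated both by the $\sqrt J$ amplitude of normalized B-splines and by the truncation level in Lemma~\ref{lem7}), and the number $Q\le hT^{1-\epsilon}$ of independent blocks, and it is precisely the margin $J(\log T)^5/\sqrt{hT}\to0$ in (A4) — together with the freedom to let $\epsilon\to0$ — that is calibrated to make this balance work. The identity, the reduction through the B-spline Gram matrix and Lemma~\ref{lem6}, the passage from $\Delta$ to $\Delta^+$ via Lemma~\ref{lem7}, and the (routine but slightly tedious) moment estimates for $\phi^+(t)$ are comparatively straightforward.
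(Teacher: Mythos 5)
Your proposal is correct and follows the same overall structure as the paper's proof (reduce to $\|\Delta-\mathbb E[\Delta]\|_{\mathrm{op}}\to_P 0$ via the decomposition of Lemma~\ref{lem7}, then close with (A4) and Lemma~\ref{lem6}), but it differs in the concentration step. The paper applies the \emph{scalar} Bernstein inequality entry-by-entry to $\frac1Q\sum_q\tilde\eta_{q,j,j'}$, union-bounds over the $O(J^2)$ entries, and then passes from the entrywise maximum to the spectral norm by the crude bound $\|\Delta-\mathbb E[\Delta]\|_2\le (J+1)\max_{j,j'}|\Delta_{j,j'}-\mathbb E[\Delta_{j,j'}]|\le CJT^\epsilon/\sqrt{hT}$. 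You instead assemble the $\tilde\eta_{q,j,j'}$ and $\tilde\eta_{q,j}$ into mean-zero symmetric random matrices $E_q$ and invoke the \emph{matrix} Bernstein inequality directly. Both arguments close under (A4), and in this regime they give essentially the same order of magnitude — matrix Bernstein avoids the explicit $J$-factor from the Frobenius bound but instead pays $\|E_q\|_{\mathrm{op}}\le\|E_q\|_F\lesssim J T^\epsilon$ in the block size, so there is no net gain; the paper's version is slightly more elementary. One genuine contribution of your write-up is that you make explicit a point the paper passes over silently: with $\epsilon>0$ fixed, the bound $CJT^\epsilon/\sqrt{hT}$ need not tend to zero under (A4), which only guarantees $J(\log T)^5/\sqrt{hT}\to0$. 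Your choice $\epsilon_T=2\log\log T/\log T$, so that $T^{\epsilon_T}=(\log T)^2$ is polylogarithmic while $Q=hT^{1-\epsilon_T}=hT/(\log T)^2$ still diverges, is exactly the adjustment needed to make both proofs rigorous, and it is worth recording. Your opening verification of the algebraic identity $\beta^\intercal\Delta\beta = \frac1{Th}\int(\nu+\int g^\intercal dN_u)^2 K\,dt$ and the B-spline Gram-matrix equivalence $\|\nu\|^2+\int g^\intercal g\asymp\|\beta\|^2$ is routine and left implicit in the paper, but is a reasonable thing to spell out.
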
 
 \begin{proof}
 By application of Bernstein's inequality we get that for all $\epsilon, \kappa^* >0$ with $C,C^*>0$ large enough that for $1 \leq j, j^\prime \leq J$
 \begin{eqnarray*}
\mathbb P\left ( \left |  \frac 1 Q \sum_{q=1}^Q \tilde \eta _{q,j,j^\prime}\right | > C T^\epsilon \frac 1 {\sqrt {hT}} \right )&\leq& C^* T^{-\kappa^*},\end{eqnarray*}
   \begin{eqnarray*}
\mathbb P\left ( \left |  \frac 1 Q \sum_{q=1}^Q \tilde \eta _{q,j}\right | > C T^\epsilon \frac 1 {\sqrt {hT}} \right )&\leq& C^* T^{-\kappa^*}.\end{eqnarray*}
 By application of Lemma \ref{lem7} this gives with probability $\geq 1 - J^2 C^* T^{-\kappa^*}$ that 
 $$ \max_{1 \leq j, j^\prime \leq J} \left | \Delta_{j,j^\prime} - \mathbb{E}[ \Delta_{j,j^\prime}] \right | \leq C T^\epsilon \frac 1 {\sqrt{hT}}$$
 and thus  
$$ \|\Delta - \mathbb{E}[ \Delta]\|_2 \leq CJT^\epsilon  \frac 1 {\sqrt{hT}}.$$
By Assumption (A4) and Lemma \ref{lem6} we get the result of the lemma.\end{proof}

We now decompose $\hat \tau_1$ and $\hat \tau_2$ as follows
\begin{eqnarray*}
\hat \tau_1 &=& \hat \tau_{1,A} + \hat \tau_{1,B}+ \hat \tau_{1,C}, \\
\hat \tau_2 &=& \hat \tau_{2,A} + \hat \tau_{2,B}+ \hat \tau_{2,C},
\end{eqnarray*}
where 
 \begin{eqnarray*}
\hat \tau_{1,A} &=& \frac 1 {Th} \int_{t_0 - Th}^{t_0 + Th}  K\left ( \frac {t-t_0} {Th}\right ) \mbox{d} M^{(1)}_t, \\
\hat \tau_{1,B} &=& \frac 1 {Th} \int_{t_0 - Th}^{t_0 + Th}  K\left ( \frac {t-t_0} {Th}\right )\bigg  [\{ \nu_0(t/T) - \beta_0^*\} \\ &&\qquad + \int_{t-A}^{t-} \bigg \{\mu_0(t-s, t/T) - \sum_{j=1} ^J \beta_j^* \psi^\intercal_j(t-s) \bigg \} \mbox{d} N_s \bigg ] \mathrm d t ,
 \\
 \hat \tau_{1,C} &=& \frac 1 {Th} \int_{t_0 - Th}^{t_0 + Th}  K\left ( \frac {t-t_0} {Th}\right )\bigg  [\beta_0^*  \\ &&\qquad + \int_{t-A}^{t-}  \sum_{j=1} ^J \beta_j^* \psi^\intercal_j(t-s)  \mbox{d} N_s \bigg ] \mathrm d t ,
\\
\hat \tau_{2,A,j} &=&\frac 1 {Th} \int_{t_0 - Th}^{t_0 + Th}\int_{t-A}^{t-} \psi^\intercal_j(t-u)  K\left ( \frac {t-t_0} {Th}\right ) \mbox{d} N_u  \mbox{d} M_t^{(1)},\end{eqnarray*}
   \begin{eqnarray*}
\hat \tau_{2,B,j} &=&\frac 1 {Th} \int_{t_0 - Th}^{t_0 + Th}\int_{t-A}^{t-} \psi^\intercal_j(t-u)  K\left ( \frac {t-t_0} {Th}\right ) \mbox{d} N_u  \bigg  [\{ \nu_0(t/T) - \beta_0^*\} \\ &&\qquad + \int_{t-A}^{t-} \bigg \{\mu_0(t-s, t/T) - \sum_{j=1} ^J \beta_j^* \psi^\intercal_j(t-s) \bigg \} \mbox{d} N_s \bigg ] \mathrm d t ,\\
\hat \tau_{2,C,j} &=&\frac 1 {Th} \int_{t_0 - Th}^{t_0 + Th}\int_{t-A}^{t-} \psi^\intercal_j(t-u)  K\left ( \frac {t-t_0} {Th}\right ) \mbox{d} N_u \bigg  [\beta_0^*  \\ &&\qquad + \int_{t-A}^{t-}  \sum_{j=1} ^J \beta_j^* \psi^\intercal_j(t-s)  \mbox{d} N_s \bigg ] \mathrm d t 
\end{eqnarray*}
with $\beta_j^* = \beta_{j,0}^*$ for $j=0,...,J$, see (A3).

Note that $(\beta_0^*,...,\beta_J^*) ^\intercal  = \Delta^{-1}  (\hat \tau_{1,C}, \hat \tau_{2,C,1}, ...,\hat \tau_{2,C,J})^\intercal$ and that, according to (A3), we have that   $|\beta_0^* - \nu^{*}_0| \leq \varepsilon_T$ and 
 and  $| \mu_0^{*,(l)}(u) -  \sum_{j=1} ^J \beta_j^* \psi_j(u)| \leq \varepsilon_T$. This remark shows that for the proof of  Theorem \ref{theo1} it remains to show the following lemma.
 \begin{lemma} \label{lem9} Make the assumptions of Theorem \ref{theo1}. Then it holds that 
\begin{eqnarray}
\label{tau1a} 
\hat \tau_{1,A}  &=& O_P\left ( \sqrt{\frac 1 {hT}}\right ),\\ \label{tau1b}
\hat \tau_{1,B}&=& O_P\left ( \varepsilon_T\right ), \\ \label{tau2a}
\hat \tau_{2,A,j}  &=& O_P\left ( \sqrt{\frac 1 {hT}}\right ),\\ \label{tau2b}
\hat \tau_{2,B,j}&=& O_P\left (\varepsilon_T\right )
\end{eqnarray} for $j=1,...,J$.
\end{lemma}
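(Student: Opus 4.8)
The plan is to split the four claims into the two ``martingale'' bounds \eqref{tau1a}, \eqref{tau2a}, which I would control through the predictable quadratic variation of a stochastic integral against $M^{(1)}$, and the two ``bias'' bounds \eqref{tau1b}, \eqref{tau2b}, which I would bound pathwise using Assumption (A5) together with the moment estimates of Lemmas \ref{lem1} and \ref{lem4}.

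For \eqref{tau1a}, $\hat\tau_{1,A}=\frac1{Th}\int_{t_0-Th}^{t_0+Th}K\!\left(\tfrac{t-t_0}{Th}\right)\mathrm dM_t^{(1)}$ is a mean-zero square-integrable martingale increment, so $\mathbb E[\hat\tau_{1,A}^2]=(Th)^{-2}\int_{t_0-Th}^{t_0+Th}K\!\left(\tfrac{t-t_0}{Th}\right)^2\Lambda_1(t/T)\,\mathrm dt$; by \eqref{boundnumber5aa} and Assumption (A1) the mean intensity $\Lambda_1$ is bounded, so the substitution $s=(t-t_0)/(Th)$ gives $\mathbb E[\hat\tau_{1,A}^2]=O(1/(hT))$ and \eqref{tau1a} follows from Markov's inequality. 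For \eqref{tau2a} the weight $\int_{t-A}^{t-}\psi_j^\intercal(t-u)\,\mathrm dN_u$ is $\mathcal F_{t-}$-measurable, so $\hat\tau_{2,A,j}$ is again a mean-zero martingale increment with $\mathbb E[\hat\tau_{2,A,j}^2]=(Th)^{-2}\int_{t_0-Th}^{t_0+Th}K\!\left(\tfrac{t-t_0}{Th}\right)^2\,\mathbb E\bigl[\bigl(\int_{t-A}^{t-}\psi_j^\intercal(t-u)\,\mathrm dN_u\bigr)^2\lambda_0^{(1)}(t)\bigr]\,\mathrm dt$, and it suffices to bound the inner expectation by a constant uniformly in $t\le T$ and $j$; this is where the normalisation $\|\psi_j\|^2=1$ is essential. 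Writing $\lambda_0^{(1)}(t)=\nu_0^{(1)}(t/T)+\sum_{m=1}^d\int_{t-A}^{t-}\mu_0^{(1,m)}(t-s,t/T)\,\mathrm dN_s^{(m)}$, the contribution of the baseline part is $\le C\,\mathbb E\bigl[\bigl(\int_{t-A}^{t-}\psi_j^\intercal(t-u)\,\mathrm dN_u\bigr)^2\bigr]$, which by \eqref{boundnumber5ac} equals $C\|\psi_j\|^2$ from the $\delta_{u-v}$ term plus contributions of order $\bigl(\int\|\psi_j\|\bigr)^2=O(1/J)$ from the remaining terms, hence is $O(1)$. The contribution of the self-exciting part is a third-order moment of $N$ over a window of length $\le 2A$; I would bound it by passing to the cluster representation and using Lemma \ref{lem1} to see that the second- and third-order intensities of $N$ are uniformly bounded on such windows, after which integrating them against $\|\psi_j(t-u)\|\,\|\psi_j(t-v)\|$ again yields $O(1)$ (the diagonal part using $\int\|\psi_j\|^2=1$ and the fully off-diagonal part using $\int\|\psi_j\|=O(1/\sqrt J)$, since the support of a normalised B-spline has length of order $1/J$). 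Thus $\mathbb E[\hat\tau_{2,A,j}^2]=O(1/(hT))$ and \eqref{tau2a} follows by Markov's inequality.

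For the bias terms, note that $t\in[t_0-Th,t_0+Th]$ forces $|t/T-x_0|\le h$, so Assumption (A5) bounds the bracket in $\hat\tau_{1,B}$ in absolute value by $\varepsilon_T\bigl(1+N_{[t-A,t]}\bigr)$, whence $|\hat\tau_{1,B}|\le\varepsilon_T\,\frac1{Th}\int_{t_0-Th}^{t_0+Th}K\!\left(\tfrac{t-t_0}{Th}\right)\bigl(1+N_{[t-A,t]}\bigr)\,\mathrm dt$; taking expectations and using \eqref{boundnumber2} together with $\frac1{Th}\int K\!\left(\tfrac{t-t_0}{Th}\right)\mathrm dt=1$ gives $\mathbb E|\hat\tau_{1,B}|=O(\varepsilon_T)$, i.e. \eqref{tau1b}. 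For \eqref{tau2b}, the same pathwise bound on the bracket gives $|\hat\tau_{2,B,j}|\le\varepsilon_T\,\frac1{Th}\int_{t_0-Th}^{t_0+Th}K\!\left(\tfrac{t-t_0}{Th}\right)\bigl|\int_{t-A}^{t-}\psi_j^\intercal(t-u)\,\mathrm dN_u\bigr|\bigl(1+N_{[t-A,t]}\bigr)\,\mathrm dt$, and applying the Cauchy--Schwarz inequality in the $t$-integral together with $\mathbb E\bigl[\bigl(\int_{t-A}^{t-}\psi_j^\intercal(t-u)\,\mathrm dN_u\bigr)^2\bigr]=O(1)$ from Lemma \ref{lem4} and $\mathbb E[(1+N_{[t-A,t]})^2]=O(1)$ from \eqref{boundnumber2} yields $\mathbb E|\hat\tau_{2,B,j}|=O(\varepsilon_T)$.

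The main obstacle is the variance of $\hat\tau_{2,A,j}$: bounding $\bigl(\int_{t-A}^{t-}\psi_j^\intercal(t-u)\,\mathrm dN_u\bigr)^2$ crudely by $\|\psi_j\|_\infty^2$ times a squared point count loses a factor $J$ (a normalised B-spline has $\|\psi_j\|_\infty^2\asymp J$) and only yields the rate $\sqrt{J/(hT)}$; obtaining the stated rate $\sqrt{1/(hT)}$ forces one to keep the full quadratic form and exploit $\int\psi_j^\intercal\psi_j=1$ and $\int\|\psi_j\|=O(1/\sqrt J)$. This in turn requires control of a third-order moment of $N$ (because $\lambda_0^{(1)}(t)$ is itself a linear functional of $\mathrm dN$), which is not supplied by Lemma \ref{lem4} and must instead be extracted from the branching/cluster construction and the exponential family-size bound of Lemma \ref{lem1}; a secondary, routine point is that all these moment bounds must be uniform in $t\le T$ for the inhomogeneous process $N$, which is guaranteed by the domination of $N$ by the stationary process $\bar N$ of \eqref{definthawkessup}.
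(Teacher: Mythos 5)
Your proposal tracks the paper's argument closely: the martingale terms \eqref{tau1a}, \eqref{tau2a} are handled via the predictable quadratic variation of the stochastic integral against $\mathrm dM^{(1)}$, and the bias terms \eqref{tau1b}, \eqref{tau2b} via Assumption (A5) plus second‑moment control of $\int_{t-A}^{t-}\psi_j^\intercal(t-u)\,\mathrm dN_u$. The only mechanical difference for the bias terms is that you bound the bracket pathwise by $\varepsilon_T(1+N_{[t-A,t]})$ and then apply Cauchy--Schwarz in $t$, whereas the paper first takes expectations and uses the elementary inequality $|ab|\le\tfrac12(a^2/\varepsilon_T+b^2\varepsilon_T)$ together with the two displayed moment estimates $\mathbb E[\tfrac1{Th}\int K\,a_t^2\,\mathrm dt]=O(\varepsilon_T^2)$ and $\mathbb E[\tfrac1{Th}\int K\,b_{j,t}^2\,\mathrm dt]=O(1)$. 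These two routes are interchangeable and rest on the same ingredients (Lemma~\ref{lem1} for integrability of $N_{[t-A,t]}$, Lemma~\ref{lem4} for the $O(1)$ second moment, the B-spline normalisation for the latter). Where you add real value is in \eqref{tau2a}: the paper dispatches both martingale bounds with ``standard arguments,'' but you correctly flag that the quadratic variation involves $\mathbb E\bigl[\bigl(\int\psi_j^\intercal(t-u)\,\mathrm dN_u\bigr)^2\lambda_0^{(1)}(t)\bigr]$, a third‑order quantity not covered by Lemma~\ref{lem4}, and that the naive bound $\|\psi_j\|_\infty^2\asymp J$ on $\psi_j$ degrades the rate to $\sqrt{J/(hT)}$. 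You therefore need the full quadratic structure (diagonal contribution $\asymp\|\psi_j\|^2=1$, off-diagonal $\asymp(\int\|\psi_j\|)^2=O(1/J)$) together with uniformly bounded third-order intensities obtained from the cluster representation and Lemma~\ref{lem1}. You state this extension rather than carry it out, which leaves it at roughly the same level of detail as the paper's ``standard arguments,'' but identifying that this is the genuine nontrivial step is the right diagnosis.
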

\begin{proof}
Claims \eqref{tau1a} and \eqref{tau2a} follow by standard arguments. For the proof of \eqref{tau2b} one can use the inequality $|a b| \leq ((a^2/\varepsilon_T) + (b^2\varepsilon_T))/2$. Note that
\begin{eqnarray*}
&&\mathbb E \bigg [ \frac 1 {Th} \int_{t_0 - Th}^{t_0 + Th} K\left ( \frac {t-t_0} {Th}\right )   \bigg  [\{ \nu_0(t/T) - \beta_0^*\} \end{eqnarray*}
   \begin{eqnarray*} &&\qquad + \int_{t-A}^{t-} \bigg \{\mu_0(t-s, t/T) - \sum_{j=1} ^J \beta_j^* \psi^\intercal_j(t-s) \bigg \} \mbox{d} N_s \bigg ]^2 \mathrm d t  \bigg ] = O(\varepsilon_T^2),\\
&&\mathbb E \bigg [ \frac 1 {Th} \int_{t_0 - Th}^{t_0 + Th} K\left ( \frac {t-t_0} {Th}\right )\bigg [\int_{t-A}^{t-} \psi^\intercal_j(t-u)   \mbox{d} N_u  \bigg  ]^2 \mathrm d t  \bigg ] = O(1).
\end{eqnarray*}
Claims \eqref{tau1b} can be shown by similar arguments. 
\end{proof}

\end{document}